\documentclass[12pt, twoside]{article}

\usepackage{verbatim, version}
\usepackage{amsmath,amssymb,latexsym, amsthm}
\usepackage{graphicx, subfigure, epsfig, psfrag}
\usepackage{amsfonts, amssymb, dsfont, mathrsfs, color}
\usepackage{diagbox}
\usepackage{paralist,bbding,pifont}
\usepackage{hyperref}




\newlength{\tmarg}\setlength{\tmarg}{1.25in}  
\newlength{\bmarg}\setlength{\bmarg}{0.75in}  
\newlength{\lmarg}\setlength{\lmarg}{1.00in}  
\newlength{\rmarg}\setlength{\rmarg}{1.00in}  

\setlength{\topmargin}{-1.0in}
\addtolength{\topmargin}{\tmarg}

\setlength{\oddsidemargin}{-1.0in}
\addtolength{\oddsidemargin}{\lmarg}

\setlength{\evensidemargin}{-1.0in}
\addtolength{\evensidemargin}{\rmarg}

\setlength{\textwidth}{\paperwidth}
\addtolength{\textwidth}{-\lmarg}
\addtolength{\textwidth}{-\rmarg}

\setlength{\textheight}{\paperheight}
\addtolength{\textheight}{-\tmarg}
\addtolength{\textheight}{-\headheight}
\addtolength{\textheight}{-\bmarg}
\addtolength{\textheight}{-\footskip}

\addtolength{\parskip}{\baselineskip}

\setlength{\parindent}{0pt}


\newtheorem{lemma}{Lemma}[section]
\newtheorem{remark}{Remark}[section]

\newtheorem{proposition}{Proposition}[section]

\newtheorem{theorem}{Theorem}[section]

\begin{document}
\include{macros}
\title{Weak solutions to the steady incompressible Euler equations with source terms}

\author{
  Anxiang Huang\thanks{School of Mathematical Sciences and Institute of Natural Sciences, Shanghai Jiao Tong University, Minhang, Shanghai, 200240, P. R. China ({\tt huanganx@sjtu.edu.cn}).}
}

\date{}

\maketitle


\begin{abstract}
  In this paper, we prove the non-uniqueness of stationary solutions to steady incompressible Euler equations with source terms. Based on the convex integration scheme developed by De Lellis and Sz\'{e}kelyhidi, the Euler system is reformulated as a differential inclusion. The key point is to construct the corresponding plane-wave solutions via high frequency perturbations. Then we use iteration and Baire category argument to conclude that there exist a large amount of weak solutions with given energy profile.
\end{abstract}

{\bf Key words.} 
Non-uniqueness, Self-similar solution, Compact support, Convex integration.

\pagestyle{myheadings}

\thispagestyle{plain}

\markboth{}{}



\section{Introduction}

\quad Consider the following $d$-dimensional steady incompressible Euler system with source terms:
\begin{align}\label{eq1}
\begin{cases}
    \text{div}(v\otimes v)+\nabla p=\mathbf{B}v,\\
    \text{div }v=0,
\end{cases}
\end{align}
where $x\in\Omega$ with $\Omega\subset\mathbb{R}^d$ or $\mathbb{T}^d$($d\geq 2$), $v$ and $p$ denote the velocity and the pressure of the flows respectively, and $\mathbf{B}$ is a $d\times d$ constant matrix.

\quad The Euler system expresses the conservation laws of mass and momentum. The well-posedness to the Euler system has always been a concern. The local well-posedness for smooth solutions to the Euler system is established in \cite{MR1867882}, and the global well-posedness is known to hold under the assumption that the $L^\infty$-norm of the velocity is $L^1$-integrable in time (\cite{MR0763762}). However, for smooth solutions to three-dimensional Euler equations, the global well-possedness is a challenging open problem. For the recent progress, one may refer to Hou-Luo (\cite{MR3176355, MR3278833}) that three-dimensional Euler equations may develop a singularity. 

\quad In the modern theory of partial differential equations, studies were more concerned with the weak solutions to \eqref{eq1}. Recall that $v\in L^\infty(\Omega)$ is a weak solution to \eqref{eq1} if
\begin{align*}
    \begin{cases}
    \int (v\otimes v\colon\nabla \phi+\phi\cdot \mathbf{B}v)dx=0,\\
    \int (v\cdot\nabla\psi)dx=0
    \end{cases}
\end{align*}
for every divergence-free vector field $\phi\in C_c^\infty(\Omega)$ and every scalar test function $\psi\in C_c^\infty(\Omega)$.

\quad Within the class of weak solutions, the Euler equations are known to have strange behavior. In particular, weak solutions might dissipate the total kinetic energy which is called "anomalous dissipation". The existence of dissipative solutions is first considered by Onsager in 1949 \cite{MR0036116}, where he conjectured that
\begin{itemize}
    \item {Any weak solution $v$ belonging to the H\"{o}lder space $C_{x,t}^{\theta}$ for $\theta>1/3$ conserves kinetic energy;}
    \item {For any $\theta<1/3$ there exist weak solutions $v\in C_{x,t}^{\theta}$ which dissipate kinetic energy.}
\end{itemize}

\quad The first part of the Onsager's conjecture was partially proved by Eyink (\cite{MR1302409}), and later fully proved by Constantin, E and Titi (\cite{MR1298949}). For the second part of the Onsager's conjecture, the first result traces back to Scheffer (\cite{MR1231007}), who constructed a nontrivial weak solution of the incompressible Euler equations in $\mathbb{R}^2\times\mathbb{R}$ with compact support in space-time. Strictly speaking, these weak solutions are not dissipative, as dissipative solutions are required to have non-increasing energy. A different construction of the existence of a compactly supported nontrivial weak solution in $\mathbb{T}^2\times\mathbb{R}$ was given by Shnirelman (\cite{MR1476315}). The first proof of the existence of a solution with monotone decreasing total kinetic energy was given by Shnirelman (\cite{MR1777341}). In the groundbreaking papers\cite{MR2600877,MR2564474}, De Lellis and Sz\'{e}kelyhidi showed the existence of infinitely many bounded weak solutions to the incompressible Euler solutions, yielding an alternative proof of Scheffer's non-uniqueness results. This breakthrough provides a better understanding of turbulence, and gives a new approach to proof the Onsager's conjecture. After that, a series of studies 
(\cite{MR3374958,MR3330471,MR3090182,MR3254331}), which mainly based on the same method, proved the existence and non-uniqueness of energy disspative solutions with H\"{o}lder exponent $\theta<1/3$. Finally, the remaining part of the proof to the Onsager's conjecture is accomplished in \cite{MR3896021,MR3866888}.

\quad Besides, the convex integration method also can be used to prove the existence and non-uniqueness of dissipate solutions to compressible Euler system (\cite{MR3352460,MR3261301,MR3269641}), steady incompressible Euler system (\cite{MR3505175}), and other fluid dynamic systems such as Navier-Stokes equations (\cite{MR4610908,MR3898708,MR3843425,MR4462623,MR3951691,MR4092687}), MHD equations (\cite{MR4510493,MR4328510,MR4510493,MR4198715}), SQG equations (\cite{MR3987721,MR4260790}), active scalar equations (\cite{MR2813340}) and so on (\cite{MR3884855,MR3884855,MR4138227}). Most of these studies consider about the case when $\mathbf{B}=\mathbf{0}$. When the source terms are non-trivial, the non-uniqueness of $L^\infty$ solutions to the compressible Euler equations has been proved in \cite{MR3459023}. 

\quad In this paper, we consider incompressible Euler equations with source terms. We show that under periodic boundary condition, the non-uniqueness and the analogue of the h-principle for stationary weak solutions in $L^\infty$ still holds even when the source terms are nontrivial. Moreover, without the periodic condition, we show the existence of infinitely many bounded weak solutions with compact supports. Our main results are as follows:

\begin{theorem}\label{tm1.1}
    Let $d\geq 2$,
    \begin{itemize}
        \item[(a)]{let $\Omega=\mathbb{T}^d$, $v_0$ be a smooth stationary flow to \eqref{eq1} on $\mathbb{T}^d$. Given a smooth function $e(x)>\left|v_0(x)\right|^2$ for $x\in \mathbb{T}^d$. Then, for any $\sigma>0$, there exist infinitely many weak solutions $v\in L^\infty(\mathbb{T}^d;\mathbb{R}^d)$ to \eqref{eq1} such that $\left|v(x)\right|^2=e(x)$ and $\left|v-v_0\right|_{H^{-1}}\leq \sigma$;}
        \item[(b)]{let $\Omega\subset\mathbb{R}^d$ be a bounded open set. Assume $e(x)\in C_c^\infty(\Omega;\overline{\mathbb{R}_{+}})$ is a non-trivial function, then there exist infinitely many compact supported weak solutions $v\in L^\infty(\mathbb{R}^d;\mathbb{R}^d)$ to \eqref{eq1} such that $\left|v(x)\right|^2=e(x)$.}
    \end{itemize}
\end{theorem}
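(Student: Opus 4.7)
My plan is to adapt the convex integration scheme of De Lellis and Sz\'{e}kelyhidi for the steady Euler equations to accommodate the zero-order source $\mathbf{B}v$. The first step is to reformulate \eqref{eq1} as a linear PDE plus a pointwise constraint. Introducing $M = v\otimes v - \tfrac{|v|^2}{d}I$ (symmetric traceless) and $q = p + \tfrac{|v|^2}{d}$ recasts \eqref{eq1} as
\begin{align*}
\text{div }v = 0, \qquad \text{div } M + \nabla q = \mathbf{B}v,
\end{align*}
coupled with the pointwise constraint $v\otimes v - M = \tfrac{|v|^2}{d}I$. A triple $(v,M,q)$ will be called a \emph{strict subsolution} if it solves the linear system and satisfies $\lambda_{\max}(v\otimes v - M) < e(x)/d$ pointwise; equality almost everywhere then yields $|v|^2 = e$ and a weak solution of \eqref{eq1}.

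The next step is a plane-wave perturbation lemma. Since the principal symbol of the linear operator is unchanged by a zero-order term, the wave cone $\Lambda$ coincides with the source-free one, and I can employ the classical localized oscillations $(\tilde v,\tilde M,\tilde q)(x) = \eta(x)(V,N,r)\cos(\lambda\xi\cdot x)$ with $(V,N,r)\in\Lambda$ and $\eta\in C_c^\infty$. The source $\mathbf{B}\tilde v$ creates an $O(1)$ defect while the differential terms are $O(\lambda)$, so I correct it by a lower-order field $(w,L,s)$ solving $\text{div }w = 0$ and $\text{div }L + \nabla s = \mathbf{B}\tilde v$ via Helmholtz decomposition and a standard elliptic solve; this corrector is $O(\lambda^{-1})$ in $L^\infty$ and therefore harmless. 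Combined with the fact that the $\Lambda$-convex hull of the pointwise constraint set coincides with the relaxed ball, this yields the usual perturbation property: any strict subsolution with energy gap $\int(e-|v|^2)\,dx\ge \delta$ admits a perturbation within the subsolution class raising $\|v\|_{L^2}^2$ by a quantitative amount depending only on $\delta$ and $e$.

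With the perturbation property in hand, I set $X_0$ to be the weak-$L^2$ closure of strict subsolutions (on $\mathbb{T}^d$ for part (a); compactly supported in $\Omega$ for part (b)). The functional $v\mapsto \|v\|_{L^2}^2$ is Baire-class one on $X_0$, continuous on a residual $G_\delta$, and the perturbation property forces every continuity point to satisfy $|v|^2 = e$ a.e., so continuity points are genuine weak solutions. For (a) I initialize the iteration at $v_0$ and carry it out inside a weak-$L^2$ ball whose image under the Rellich-compact embedding $L^2 \hookrightarrow H^{-1}$ has diameter at most $\sigma$; for (b) the base subsolution is zero, and the localized perturbations automatically keep $\operatorname{supp}(v)\subset\overline{\Omega}$. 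Infinitely many distinct solutions follow because the residual subset is uncountable.

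The hardest step will be the corrector analysis in the plane-wave construction: one must verify that the corrector $(w,L,s)$ is uniformly lower order in $x$, is compatible with the localization of the main oscillation, and does not destroy the strict inequality in the subsolution condition. Once this is established, the iteration and Baire-category portions mirror the framework of \cite{MR2600877,MR3505175}, and the two parts of the theorem are obtained by the obvious choices of base subsolution and supporting function space.
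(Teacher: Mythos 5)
Your overall strategy matches the paper's: reformulate as a differential inclusion with the same state variables $(v,\mathbf{U},q)$, observe that the wave cone is unaffected by the zero-order source, build localized plane waves with a small corrector absorbing $\mathbf{B}v$, and run a Baire-category argument on the weak-$L^2$ closure of strict subsolutions. However, there are two concrete gaps that the paper has to work around and your sketch does not.

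First, the corrector construction. You propose a truncated plane wave $\eta(x)\,\bar{w}\cos(\lambda\xi\cdot x)$ and a corrector $(w,L,s)$ obtained via Helmholtz decomposition and ``a standard elliptic solve.'' That solve is nonlocal, and the corrector it produces will in general not be compactly supported in $\mathcal{O}$ --- which ruins the localization that the perturbation property requires, and is fatal for part (b) where everything must stay supported in $\Omega$. The paper avoids this by not starting from a truncated plane wave at all: the main oscillation is written as a high-order potential $\lambda^{-6}\Delta^3\bigl[\bar{w}\,h_6(\lambda\xi\cdot x)\phi\bigr]$, and the defect (including the source contribution) then collapses to the form $\lambda^{-6}\Delta^2 f$ with $\operatorname{supp} f\subset\operatorname{supp}\phi$. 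The corrector for the momentum equation is then $\mathcal{R}[\Delta^2 f]$, where $\mathcal{R}$ is the \emph{compactly-supported} right inverse of the divergence from Lemma~\ref{lm3.2}, which satisfies $\operatorname{supp}\mathcal{R}[\Delta^2 f]\subset\operatorname{supp} f$ together with an $L^\infty$ bound; similarly $v''=-\nabla\Delta^{-1}\nabla\cdot v'$ simplifies to a local expression because $\Delta^{-1}\Delta^3=\Delta^2$. Without this specific potential/inverse-divergence structure, you cannot deliver a corrector that is simultaneously lower order, divergence-compatible, and localized --- the step you flagged as ``hardest'' is precisely what is missing, and a generic elliptic solve does not do it.

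Second, the relaxation set. You assert that ``the $\Lambda$-convex hull of the pointwise constraint set coincides with the relaxed ball,'' which is tantamount to taking $\mathcal{U}_r=\mathring{K}_r^{\mathrm{co}}$. This works for $d\ge 3$ because the segments $(a,a\otimes a)-(b,b\otimes b)$ with $|a|=|b|$ lie in $\Lambda$ (Lemma~\ref{lm4.2}), but it fails for $d=2$: there is no $\xi\neq 0$ orthogonal to two linearly independent vectors in the plane, so these admissible segments need not be $\Lambda$-directions and the laminate hull is strictly smaller than the convex hull. The paper handles $d=2$ with a separate, explicit relaxation set $\mathcal{U}_r=\mathcal{V}_r^{\mathrm{lc}}$ built from the Choffrut--Sz\'{e}kelyhidi coordinates (Section~5). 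Your proposal as written is silent on this and would not close for $d=2$. Apart from these two points --- the compactly supported corrector and the two-dimensional relaxation --- your outline of the Baire argument, the use of $v_0$ as base subsolution on $\mathbb{T}^d$, the zero base subsolution for the compactly supported case, and the weak-$L^2$/$H^{-1}$ bookkeeping all agree with the paper's Propositions~\ref{pp6.1} and~\ref{pp6.2}.
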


\begin{remark}
    Compared with the results obtained by Choffrut and Sz\'{e}kelyhidi (\cite{MR3505175}), Theorem \ref{tm1.1}(a) shows that, when the source terms are nontrivial, there still exist a large amount of weak solutions in the neighbourhood of any smooth solution.
\end{remark}

\quad The proof is mainly based on the convex integration framework for the Euler equation. In Section 2, we reformulate the Euler equation as a differential inclusion. By using Tartar's framework, we can reformulate the nonlinear system as a linear system with nonlinear constraints. In Section 3, localized solutions are constructed, which is the building block to the iteration scheme. And in Sections 4 and 5, we construct suitable constraint sets which ensure the existence of the plane-wave solutions. Finally, by using Baire category argument, we prove the existence and non-uniqueness of the solutions to the Euler system.  

\section{Reformulation As A Differential Inclusion}

\quad First of all, recall the Tartar's framework for nonlinear systems. Consider the nonlinear partial differential equations which can be expressed as a system of linear partial differential equations
\begin{align}\label{eq2}
    \mathop{\sum}\limits_{i=1}^{m}A_i\partial_i z=0
\end{align}
with corresponding nonlinear constraint
\begin{align*}
    z(x)\in K\subset\mathbb{R}^d\quad \text{a.e.},
\end{align*}
where $z:\Omega\subset\mathbb{R}^m\rightarrow \mathbb{R}^d$ is the unknown state variable. Define the plane-wave solutions to \eqref{eq2} as the solutions of the form
\begin{align}\label{eq3}
    z(x)=ah(x\cdot \xi),
\end{align}
where $h:\mathbb{R}\rightarrow \mathbb{R}$ is a smooth profile. Define the wave cone $\Lambda$ to \eqref{eq2} as the set of states $a\in\mathbb{R}^d$ such that for any choice of profile function $h$, the function $z$ defined in \eqref{eq3} is a solution to \eqref{eq2}. By direct computation, the wave cone $\Lambda$ can be expressed as
\begin{align}\label{eq4}
    \Lambda:=\left\{a\in\mathbb{R}^d: \text{ there exists a }\xi\in\mathbb{R}^m\backslash \{0\}\quad\text{s.t.}\quad\mathop{\sum}\limits_{i=1}^m\xi_iA_ia=0\right\}.
\end{align}
\quad Under this framework, one can reformulate the nonlinear system to a linear system with nonlinear constraints. Since the plane wave solutions can be added linearly, the oscillatory behavior of solutions to the nonlinear system is then determined by the nonlinear constraints, that is the compatibility of the set $K$ with the corresponding wave cone $\Lambda$.

\quad The steady Euler system with source terms can then be reformulated as a differential inclusion under Tartar's framework.
Set $\mathbb{R}^m=\mathbb{R}^d$ and the state variable $z:=(v,\mathbf{U})$ with
\begin{align*}
    \mathbf{U}=v\otimes v-\frac{1}{d}\left|v \right|^2\mathbf{I}_d,
\end{align*}
where $\mathbf{I}_d$ denotes $d\times d$ identity matrix, and set the corresponding function $q=p+\left|v\right|^2/d$. Denote the set of $d\times d$ symmetric, trace-free matrices by
\begin{align*}
    \mathbb{S}_0^{d\times d}=\{\mathbf{U}\in\mathbb{R}^{d\times d}:\mathbf{U}^T=\mathbf{U}, \text{tr}(\mathbf{U})=0\}.
\end{align*}
By definition, $\mathbb{S}_0^{d\times d}$ is a linear subspace of $\mathbb{R}^{d_\ast}$ with $d_\ast=d(d+1)/2-1$. Denote the operator norm of $\mathbf{U}\in\mathbb{S}_0^{d\times d}$ by $\left|\mathbf{U}\right|$. For convenience, we assume that $\Omega=\mathbb{R}^d \text{ or } \mathbb{T}^d$. By direct computation, we have the following lemma.

\begin{lemma}\label{lm2.1}
    Let $d\geq 2$, let $e(x)\in C(\Omega)$ be a positive function. Assume that $(v,\mathbf{U},q)\in L^\infty(\Omega;\mathbb{R}^d)\times L^\infty(\Omega;\mathbb{S}_0^{d\times d})\times \mathcal{D}'(\Omega;\mathbb{R})$ is a weak solution to
\begin{align}\label{eq5}
\begin{cases}
	\text{div}\ \mathbf{U}+\nabla q=\mathbf{B}v,\\
	\text{div}\ v=0,
\end{cases}
\end{align}
in the sense of distribution. If
\begin{align*}
	\mathbf{U}=v\otimes v-\dfrac{e}{d}\mathbf{I}_d,\quad \text{a.e.}\quad x\in\Omega,
\end{align*}
then $(v,p)$ is a weak solution of \eqref{eq1}, where $p=q-e/d$ and $\left|v(x)\right|^2=e(x)$ for  $a.e.\ x\in\Omega$.
\end{lemma}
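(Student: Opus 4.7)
The plan is to verify the lemma by a direct algebraic computation, essentially unpacking the definitions. Since the statement merely asserts that the reformulated linear system \eqref{eq5} together with the nonlinear pointwise constraint $\mathbf{U}=v\otimes v-(e/d)\mathbf{I}_d$ recovers the original Euler system \eqref{eq1}, there is no genuine analytic obstacle; the content is the correct bookkeeping of the trace part.

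First I would observe that, pointwise almost everywhere,
\[
\text{div}\,\mathbf{U}=\text{div}(v\otimes v)-\tfrac{1}{d}\nabla e,
\]
since $\mathbf{I}_d$ is constant and the divergence of $(e/d)\mathbf{I}_d$ is simply $(1/d)\nabla e$. This identity is to be read in the sense of distributions, which is legitimate because $v\in L^\infty$ implies $v\otimes v\in L^\infty$ and $e\in C(\Omega)$. Substituting into the first equation of \eqref{eq5} and setting $p:=q-e/d\in\mathcal{D}'(\Omega)$ gives
\[
\text{div}(v\otimes v)+\nabla p=\mathbf{B}v
\]
in $\mathcal{D}'(\Omega)$, which is exactly the momentum equation in \eqref{eq1}. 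The incompressibility $\text{div}\,v=0$ is unchanged, so $(v,p)$ is a weak solution of \eqref{eq1} in the sense recalled in the introduction.

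Next, to obtain the kinetic-energy identity $|v(x)|^2=e(x)$, I would use the trace-free condition $\mathbf{U}\in\mathbb{S}_0^{d\times d}$. Taking the trace of the pointwise constraint $\mathbf{U}=v\otimes v-(e/d)\mathbf{I}_d$ yields
\[
0=\text{tr}(\mathbf{U})=\text{tr}(v\otimes v)-\tfrac{e}{d}\text{tr}(\mathbf{I}_d)=|v|^2-e
\]
almost everywhere in $\Omega$, as desired. One should also note that the symmetry requirement $\mathbf{U}^T=\mathbf{U}$ is automatic from $v\otimes v$.

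The only step requiring any attention is justifying the distributional manipulation, i.e., the pairing of $\nabla(e/d)$ and $\nabla q$ against test functions; this follows from $e\in C(\Omega)$ and the definition of distributional derivatives, so I would dispatch it with one line. There is no genuine obstacle in this lemma: it is a translation dictionary between the Euler system \eqref{eq1} and the constrained linear system \eqref{eq5} that will feed the Tartar/convex-integration machinery in later sections, and every implication is a direct consequence of the algebraic relations above.
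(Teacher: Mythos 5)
Your proof is correct and is exactly the ``direct computation'' the paper alludes to but does not spell out: substitute the constraint into the linear system, absorb the $\nabla(e/d)$ term into the pressure, and read off $|v|^2=e$ by taking the trace of the trace-free matrix $\mathbf{U}$. Nothing further is needed.
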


\quad We can now define the subsolution to the Euler system. We call a pair $w:=(v,\mathbf{U})\in\Omega\rightarrow \mathbb{R}^d\times \mathbb{S}^{d\times d}_0$ a stationary subsolution, if there exists a distribution $q\in\mathcal{D}'(\Omega)$ such that the triple $(v,\mathbf{U},q)$ is a weak solution to \eqref{eq5}. One important thing is that, though the source term of the Euler system may be non-trivial, this source term is linear under Tartar's framework. Hence, it only appears and makes differences in the plane-wave solutions. The nonlinear constraints are the same as the system without source terms.

\quad In order to construct solutions with specific energy profile, the nonlinear constraint set is defined as follows. For any $r>0$, denote
\begin{align}\label{eq6}
	K_r:=\left\{(v,\mathbf{U})\in\mathbb{R}^d\times \mathbb{S}_0^{d\times d}:\mathbf{U}=v\otimes v-\dfrac{r}{d}\mathbf{I}_d\right\}\subset \mathbb{R}^{d_{\ast}},
\end{align}
where $d_{\ast}=d(d+1)/2-1$. Since for any $r>0$, the set $K_r$ is a compact, smooth submanifold of $\mathbb{R}^{d_{\ast}}$ with dimension $d$, a weak solution to Euler equations \eqref{eq1} with energy profile $e(x)$ is therefore a subsolution $w=(v,\mathbf{U})$ which satisfies the pointwise inclusion
\begin{align*}
	w(x)\in K_{e(x)} \quad \text{for a.e.}\quad x\in\Omega.
\end{align*}
The idea of convex integration is then to relax the constraint set $K_{e(x)}$ to a suitable non-empty open subset of the convex hull:
\begin{align*}
	\mathcal{U}_{e(x)}\subset K_{e(x)}^\text{co}.
\end{align*}
\quad The key property required of the sets $\mathcal{U}_r\subset K_{r}^{\text{co}}$ is the following:

\textbf{Perturbation Property(P):} There is a strictly increasing function $\Phi: [0,+\infty)\rightarrow[0,+\infty)$ with $\Phi(0)=0$ and satisfies the following properties. Let $Q=(0,1)^{d}$ be the open unit cube in $\mathbb{R}^{d}$, then for every $\bar{w}=(\bar{v},\bar{\mathbf{U}})\in\mathcal{U}_{r}$, there exists a subsolution $w=(v,\mathbf{U})\in C_{c}^{\infty}(Q;\mathbb{R}^{d}\times \mathbb{S}_0^{d\times d})$ with associated pressure $q\in C_c^{\infty}(Q)$ such that
\begin{itemize}
	\item {$\bar{w}+w(x)\in\mathcal{U}_r$ for all $x\in Q$;}
	\item {$\int_Q\left|w(x)\right|^2dx\geq \Phi(\text{dist}(\bar{w},K_r))$;}
	\item {$\int_Qw(x)dx=0$.}
\end{itemize}

\quad Recall that, for the constraint set $K_r$ defined in \eqref{eq6}, the convex hull of $K_r$ can be explicitly formulated as
\begin{align}\label{eq7}
	K_r^{\text{co}}=\left\{(v,\mathbf{U})\in\mathbb{R}^{d}\times\mathcal{S}_0^{d\times d}:v\otimes v-\mathbf{U}\leq\dfrac{r}{d}\mathbf{I}_d\right\}.
\end{align}
Thus, for given $\bar{w}=(\bar{v},\bar{\mathbf{U}})\in K_r^{\text{co}}$, $\left|\bar{v}\right|^2=r$ implies $\bar{w}\in K_r$. Hence, there exists a continuous strictly increasing function $\Psi:[0,+\infty)\rightarrow[0,+\infty)$ with $\Psi(0)=0$ such that
\begin{align*}
	\text{dist}(\bar{w},K_r)\leq\Psi(r-\left|\bar{v}\right|^2)\quad \text{for all}\quad \bar{w}=(\bar{v},\bar{\mathbf{U}})\in K_r^{\text{co}}.
\end{align*}
One can then replace $\text{dist}(\bar{w},K_r)$ by $r-\left|\bar{v}\right|^2$ in Property (P).

\quad In order to obtain the precise statement of the Theorem \ref{tm1.1}, in addition to the Property (P) we require:
\begin{align}\label{eq8}
	K_r\subset \mathcal{U}_{r'}\quad\text{for}\quad r<r'.
\end{align}
Property \eqref{eq8} will ensure that smooth stationary flows belong to the set of subsolutions given by the relaxed set $\mathcal{U}_r$.

\section{Localized Plane-waves}

\quad In this section, the wave cone and the corresponding plane-waves are constructed, which form the building blocks for the iteration scheme. 

\quad  First, consider the stationary Euler system:
\begin{align*}
    \begin{cases}
        \text{div} (v\otimes v)+\nabla p=0,\\
        \text{div }v=0,
    \end{cases}
\end{align*}
and the corresponding differential inclusion is of the form
\begin{align}\label{eq9}
    \begin{cases}
        \text{div }\mathbf{U}+\nabla q=0,\\
        \text{div }v=0.
    \end{cases}
\end{align}
\quad With the form of differential inclusion and the corresponding nonlinear constraint set, one can then determine the corresponding wave cone to the linear system.

\quad Consider the $(d+1)\times d$ matrix in block form
\begin{align*}
    \bar{\mathbf{U}}'=\begin{pmatrix}
        \mathbf{U}+q\mathbf{I}_d\\
        v
      \end{pmatrix}.
\end{align*}
Then, the system \eqref{eq9} becomes the form $\text{div }\mathbf{U}'=0$. Substituting the plane-wave solutions \eqref{eq3} to the linear system \eqref{eq9} gives
\begin{align*}
    \text{div }\bar{\mathbf{U}}'=\text{div}\left[\begin{pmatrix}
    \mathbf{U}+q\mathbf{I}_d\\
    v
    \end{pmatrix}h(x\cdot \xi)\right]
    =\begin{pmatrix}
    \mathbf{U}+q\mathbf{I}_d\\
    v
    \end{pmatrix}\cdot \xi h'(x\cdot\xi)=0.
\end{align*}
By definition of the wave cone, $z(x)=(v,\mathbf{U})h(x\cdot \xi)$ are plane-wave solutions for any choice of profile function $h$, hence the wave cone corresponding to system \eqref{eq9} can be expressed as
\begin{align}\label{eq10}
	\Lambda=\left\{(\bar{v},\bar{\mathbf{U}})\in(\mathbb{R}^{d}\backslash\{0\})\times\mathbb{S}_0^{d\times d}:\exists \bar{q}\in\mathbb{R},\xi\in\mathbb{R}^d\backslash\{0\} \text{ s.t } \bar{\mathbf{U}}\xi+\bar{q}\xi=0,\ \bar{v}\cdot \xi=0\right\}.
\end{align}

\quad In order to construct solutions to the nonlinear system, one needs to find plane wave like solutions which are localized in space. In fact, the exact plane-wave solutions with compact supports are identically zero. Therefore, for given wave state $a\in\mathbb{R}^d$, one needs to introduce an error and construct the plane-wave solutions to the nonlinear system such that this error is under control. For the incompressible Euler equations, the set of the wave-cone \eqref{eq10} corresponds to the plane-wave solutions to \eqref{eq9}. Hence, one can construct stationary subsolutions with specific oscillatory behavior. Recall that, if the source term is trivial, i.e. $\mathbf{B}=\mathbf{0}$, one can construct the localize the plane-wave solutions by using the same potentials as in the time-dependent case. More precisely, we have the following lemma.

\begin{lemma}(\cite[Proposition 3.2]{MR2600877} and \cite[Proposition 20]{MR2968597})\label{lm3.1}
Let $d\geq 2$, given $\bar{w}=(\bar{v},\bar{\mathbf{U}})\in \Lambda$, then
\begin{enumerate}
    \item [(a)]{there exists an $\eta\in\mathbb{R}^d\backslash\{0\}$ such that for any $h\in C^\infty(\mathbb{R}),$\begin{align*}
        w(x):=\bar{w}h(x\cdot\eta)
    \end{align*}is a subsolution;}
    \item [(b)]{there exists a second order homogeneous linear differential operator $\mathcal{L}_{\bar{w}}$ such that\begin{align*}
        w:=\mathcal{L}_{\bar{w}}[\phi]
    \end{align*} is a subsolution for any $\phi\in C^\infty(\mathbb{R}^d)$;}
    \item [(c)]{moreover, if $\phi(x)=H(x\cdot \eta)$ for some $H\in C^\infty(\mathbb{R})$, then\begin{align*}
        \mathcal{L}_{\bar{w}}[\phi](x)=\bar{w}H''(x\cdot \eta).
    \end{align*}}
\end{enumerate}
\end{lemma}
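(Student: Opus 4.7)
My plan is to verify (a) directly from the wave-cone description \eqref{eq10}, then obtain (b)--(c) by building a second-order potential operator whose output automatically lies in the nullspace of the linear operator defining \eqref{eq9}.

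For (a), the inclusion $\bar{w}=(\bar{v},\bar{\mathbf{U}})\in\Lambda$ furnishes $\bar{q}\in\mathbb{R}$ and $\xi\in\mathbb{R}^{d}\setminus\{0\}$ with $(\bar{\mathbf{U}}+\bar{q}\mathbf{I}_d)\xi=0$ and $\bar{v}\cdot\xi=0$. I would set $\eta:=\xi$, define $v(x):=\bar{v}\,h(x\cdot\eta)$, $\mathbf{U}(x):=\bar{\mathbf{U}}\,h(x\cdot\eta)$, $q(x):=\bar{q}\,h(x\cdot\eta)$, and then apply the chain rule to get
\begin{align*}
\text{div}\,\mathbf{U}+\nabla q=(\bar{\mathbf{U}}\eta+\bar{q}\eta)h'(x\cdot\eta)=0,\qquad \text{div}\,v=(\bar{v}\cdot\eta)h'(x\cdot\eta)=0,
\end{align*}
so $w=\bar{w}h(x\cdot\eta)$ is a subsolution of \eqref{eq9}.

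For (b) and (c), the natural ansatz for the velocity part of $\mathcal{L}_{\bar{w}}[\phi]$ is
\begin{align*}
v(x):=\frac{1}{|\xi|^{2}}\bigl(\bar{v}\,\Delta\phi(x)-\nabla(\bar{v}\cdot\nabla\phi(x))\bigr),
\end{align*}
which is divergence-free for every smooth $\phi$ (by commutation of partial derivatives) and which reduces to $\bar{v}\,H''(x\cdot\eta)$ when $\phi(x)=H(x\cdot\eta)$, since then $\bar{v}\cdot\nabla\phi=(\bar{v}\cdot\xi)H'=0$. A completely analogous second-order expression built from $\bar{\mathbf{U}}$, $\bar{q}$ and $\xi$ produces $\mathbf{U}$ and $q$. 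Packaging these formulas together yields a second-order homogeneous linear operator $\mathcal{L}_{\bar{w}}$; part (c) is then automatic, because the plane-wave substitution $\phi(x)=H(x\cdot\eta)$ makes every term in the potential collapse to the ansatz of part (a).

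The main obstacle is the simultaneous enforcement of three structural constraints on $(v,\mathbf{U},q)=\mathcal{L}_{\bar{w}}[\phi]$: divergence-freeness of $v$, the Euler-type relation $\text{div}\,\mathbf{U}+\nabla q=0$, and $\mathbf{U}\in\mathbb{S}_0^{d\times d}$. The clean way to address this, which I would follow, is to regard the extended $(d+1)\times d$ field
\begin{align*}
\mathbf{U}':=\begin{pmatrix}\mathbf{U}+q\mathbf{I}_d\\ v\end{pmatrix}
\end{align*}
as a single row-divergence-free tensor and to write each of its rows via an antisymmetric potential built from derivatives of $\phi$ with coefficients drawn from $(\bar{\mathbf{U}}+\bar{q}\mathbf{I}_d,\bar{v})$ and $\xi$, so that vanishing row divergences become an identity in $\phi$ (as in the construction of De Lellis--Sz\'ekelyhidi). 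Symmetry and trace-freeness of the upper $d\times d$ block survive because $\bar{\mathbf{U}}\in\mathbb{S}_0^{d\times d}$ by hypothesis, and any spherical part produced by the construction can be absorbed into the pressure $q$; verifying these algebraic compatibilities is the only delicate step.
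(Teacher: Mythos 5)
The paper itself does not prove Lemma~\ref{lm3.1}; it cites it from \cite{MR2600877} and \cite{MR2968597}. So the question is whether your plan constitutes a valid proof and whether it tracks what those references actually do. Part~(a) is fine: you read off $\bar{q}$ and $\xi$ from \eqref{eq10}, set $\eta=\xi$, and the chain-rule computation closes. Your velocity formula
\begin{align*}
v(x)=\frac{1}{|\xi|^{2}}\bigl(\bar{v}\,\Delta\phi-\nabla(\bar{v}\cdot\nabla\phi)\bigr)
\end{align*}
is also correct: it is divergence-free for any $\phi$, and under $\phi=H(x\cdot\xi)$ it collapses to $\bar{v}H''$ because $\bar{v}\cdot\xi=0$.

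The gap is in the stress/pressure part, precisely the piece you call ``the only delicate step'' and then do not carry out. The ``completely analogous'' expression $\mathbf{M}:=|\xi|^{-2}\bigl(\bar{\mathbf{M}}\Delta\phi-\nabla(\bar{\mathbf{M}}\nabla\phi)\bigr)$ with $\bar{\mathbf{M}}:=\bar{\mathbf{U}}+\bar{q}\mathbf{I}_d$ is row--divergence-free but \emph{not} symmetric, since $\partial_i(\bar{\mathbf{M}}_{jk}\partial_k\phi)$ is not symmetric in $(i,j)$ unless $\bar{\mathbf{M}}$ is a multiple of the identity. Your alternative route (write each row of the $(d+1)\times d$ field $\mathbf{U}'$ as the divergence of an antisymmetric potential) has the same defect: row-by-row potentials produce a row--divergence-free matrix but give no control on the symmetry of the upper $d\times d$ block, and the claim that symmetry ``survives because $\bar{\mathbf{U}}\in\mathbb{S}_0^{d\times d}$'' is not justified by anything you wrote. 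What actually closes the argument is a symmetrized potential with a spherical correction, e.g.
\begin{align*}
\mathbf{M}:=\frac{1}{|\xi|^{2}}\Bigl[\bar{\mathbf{M}}\,\Delta\phi-\nabla(\bar{\mathbf{M}}\nabla\phi)-\bigl(\nabla(\bar{\mathbf{M}}\nabla\phi)\bigr)^{T}+\bigl(\bar{\mathbf{M}}:\nabla^{2}\phi\bigr)\mathbf{I}_{d}\Bigr],
\end{align*}
then $q:=\tfrac{1}{d}\operatorname{tr}\mathbf{M}$ and $\mathbf{U}:=\mathbf{M}-q\,\mathbf{I}_d$. One checks that $\mathbf{M}$ is symmetric, that $\operatorname{div}\mathbf{M}=0$ identically (the extra two terms are each other's divergence up to sign), and that under $\phi=H(x\cdot\xi)$ the last three terms vanish because $\bar{\mathbf{M}}\xi=0$ and $\xi^{T}\bar{\mathbf{M}}\xi=0$, leaving $\mathbf{M}=\bar{\mathbf{M}}H''$, hence $q=\bar{q}H''$ and $\mathbf{U}=\bar{\mathbf{U}}H''$ (here $\operatorname{tr}\bar{\mathbf{U}}=0$ is used). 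Without this symmetrization your $\mathcal{L}_{\bar{w}}$ does not land in $\mathbb{S}_0^{d\times d}$, so parts~(b) and~(c) are not yet established.
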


\quad However, when the source terms are nontrivial, for $w_1,w_2\in K_{e(x)}$, there may not be plane wave solutions to \eqref{eq5} with profiles $w_1-w_2$. But we note that, in the high-frequency regime, localized plane waves with sources can be constructed as perturbations of the plane waves of the homogeneous system.

\quad In order to compute the errors from the source terms, we need the following lemma.
\begin{lemma}\label{lm3.2}(\cite[Lemma 8]{MR3459023})
	For any $f\in C_c^{\infty}(\mathbb{R}^d;\mathbb{R}^d)$, there exists an $\mathcal{R}[f]\in\mathcal{C}^{\infty}(\mathbb{R}^d;\mathbb{S}_0^{d\times d})$ satisfying
	\begin{align*}
		\nabla \cdot \mathcal{R}[f]=f.
	\end{align*}
	Furthermore, $\mathcal{R}$ satisfies the following properties:
	\begin{itemize}[(i)]
		\item [(a)]{$\mathcal{R}$ is a linear operator;}
		\item [(b)]{$\mathcal{R}[\Delta^2f]$ is a linear combination of third order derivatives of $f$;}
		\item [(c)]{$\text{supp}\mathcal{R}[\Delta^2f]\subset \text{supp} f$ and $\mathcal{R}[\Delta^2f]\in L(\mathbb{R}^d)$ i.e.
			\begin{align*} 
				\int_{\mathbb{R}^d}\mathcal{R}[\Delta^2f]dx=0;
			\end{align*}}
		\item [(d)]{there exists a constant $0<\alpha<1$ such that\begin{align*} \|\mathcal{R}[f]\|_{C^{\alpha}(\mathbb{R}^d)}\leq C\max(\|f\|_{\mathcal{L}^{1}(\mathbb{R}^d)},\|f\|_{\mathcal{L}^{\infty}(\mathbb{R}^d)}),\end{align*}
			
			where the constants $C$ and $\alpha$ depend only on dimension $d$.}
	\end{itemize}
\end{lemma}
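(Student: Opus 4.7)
The plan is to construct $\mathcal{R}$ explicitly as a pseudo-differential operator of order $-1$ built from the Newtonian potential. First I would set $u_i := \Delta^{-1} f_i$, which is well-defined on compactly supported smooth data. The naive choice $N_{ij} := \partial_j u_i$ already satisfies $\sum_j \partial_j N_{ij} = f_i$ but is neither symmetric nor trace-free. To repair this, I would symmetrize, subtract a trace-correcting multiple of $\delta_{ij}$, and add a second-order correction involving $\Delta^{-2}\nabla\nabla\cdot f$, arriving at a formula of the form
\begin{align*}
\mathcal{R}[f]_{ij} := \partial_j \Delta^{-1} f_i + \partial_i \Delta^{-1} f_j - \tfrac{2}{d}\delta_{ij}\,\Delta^{-1}(\nabla\cdot f) + \bigl(\alpha\,\partial_i\partial_j + \beta\,\delta_{ij}\Delta\bigr)\Delta^{-2}(\nabla\cdot f),
\end{align*}
with constants $\alpha,\beta\in\mathbb{R}$ tuned so that $\mathcal{R}[f]$ is simultaneously symmetric, trace-free, and satisfies $\nabla\cdot\mathcal{R}[f]=f$.

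Properties (a)--(c) would then follow from a symbol calculation. Linearity (a) is immediate. The Fourier multiplier of $\mathcal{R}$ takes the form $q(\xi)/|\xi|^4$ with $q$ a vector-valued polynomial of degree three in $\xi$: each term in the explicit formula has two more derivatives than inverse Laplacians, so the net order is $-1$. Multiplying this symbol by $|\xi|^4$ (i.e. precomposing $\mathcal{R}$ with $\Delta^2$) clears the denominator and leaves $q(\xi)$, so $\mathcal{R}[\Delta^2 f] = q(D) f$ is a genuine linear differential operator of third order acting on $f$. This is exactly (b); as a local differential operator it preserves supports, giving $\text{supp}\,\mathcal{R}[\Delta^2 f]\subseteq \text{supp}\,f$, and since $q$ is purely of degree three, $q(D)f$ is a linear combination of third derivatives of a compactly supported smooth function, whose integral over $\mathbb{R}^d$ vanishes by the divergence theorem, giving (c).

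For the estimate (d), each entry of $\mathcal{R}[f]$ is a linear combination of a Riesz potential of order one applied to $f$ and a Calder\'on--Zygmund singular integral applied to $\Delta^{-1}(\nabla\cdot f)$. Splitting the convolution kernel into its near-field and far-field contributions, I would control the near-field by $\|f\|_{L^\infty}$ (via the classical Schauder estimate for the Newtonian potential of a bounded source, which yields $C^\alpha$ regularity for some $\alpha\in(0,1)$ depending only on $d$) and the far-field by $\|f\|_{L^1}$ (which handles the slow decay of the Newtonian kernel, in particular its logarithmic growth in the marginal case $d=2$). I expect the main obstacle to be the purely algebraic step of choosing the correction constants $\alpha,\beta$ so that symmetry, tracelessness, and the exact identity $\nabla\cdot\mathcal{R}[f]=f$ hold simultaneously for arbitrary (not necessarily mean-zero or divergence-free) $f$, while maintaining the clean order $-1$ homogeneity that makes the symbol cancellation in (b) work; once this tuning is in place, the remaining analytic estimates reduce to standard singular-integral theory.
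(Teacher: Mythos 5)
This lemma is stated in the paper only as a citation to \cite[Lemma 8]{MR3459023}; no proof is given here, so there is no in-paper argument to compare against. Your proposed construction is, however, exactly the standard inverse-divergence operator used throughout the convex-integration literature (and, to the best of my knowledge, in the cited reference itself), and your proof sketch is correct. The tuning step is genuinely determined: requiring $\operatorname{tr}\mathcal{R}[f]=0$ forces $\alpha+\beta d=0$, and requiring $\nabla\cdot\mathcal{R}[f]=f$ forces $\alpha+\beta=\tfrac{2-d}{d}$, which for $d\geq 2$ has the unique solution
\begin{align*}
\alpha=-\frac{d-2}{d-1},\qquad \beta=\frac{d-2}{d(d-1)},
\end{align*}
so in particular $\alpha=\beta=0$ when $d=2$. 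Your symbol argument for (b)--(c) is exactly the intended mechanism: each term of $\mathcal{R}$ has symbol $q(\xi)/|\xi|^{4}$ with $q$ homogeneous of degree three, so $\mathcal{R}\circ\Delta^{2}=q(D)$ is a local third-order differential operator, which preserves supports and, being a sum of pure third derivatives of a $C_c^\infty$ function, has vanishing integral. For (d), your near-field/far-field split is the right strategy; one minor point worth spelling out when writing it up is that the corrector terms involve zero-order Calder\'on--Zygmund operators applied to the non-compactly-supported function $\Delta^{-1}(\nabla\cdot f)$, so the Schauder estimate has to be paired with the explicit $|x|^{1-d}$ decay of that potential (controlled by $\|f\|_{L^1}$) to get a global $C^{\alpha}$ bound rather than only a local one, and the marginal $d=2$ case of $\Delta^{-1}$ must be handled via a derivative falling on the kernel, as you anticipate.
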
	

\quad By using Lemma \ref{lm3.1} and Lemma \ref{lm3.2}, one can construct the localized plane-wave solutions to \eqref{eq5}, which form the basic building-block to obtain more complicated oscillatory behavior.

\begin{lemma}\label{lm3.3}
Let $d\geq 2$, let $w,w_1,w_2\in\mathbb{R}^{d}\times \mathcal{S}_0^{d\times d}$, and $\mu_1,\mu_2>0$ be such that
\begin{align*}
	\bar{w}=w_2-w_1\in\Lambda,\quad w=\mu_1w_1+\mu_2w_2,\quad \mu_1+\mu_2=1.
\end{align*}
Given a bounded open set $\mathcal{O}\in\mathbb{R}^d$, and any $\varepsilon>0$, there exists a subsolution $\tilde{w}\in C_c^{\infty}(\mathcal{O};\mathbb{R}^{d}\times\mathbb{S}_0^{d\times d})$ such that
\begin{itemize}
	\item [(a)]{$\text{dist}(w(x)+\tilde{w},[w_1,w_2])<\varepsilon$ for all $x\in \mathcal{O}$;}
	\item [(b)]{there exist disjoint open subsets $\mathcal{O}_1,\mathcal{O}_2\in\mathcal{O}$ such that for $i=1,2$,
	\begin{align*}
		\left|w+\tilde{w}-w_i\right|<\varepsilon\quad \text{for}\quad x\in \mathcal{O}_i,\quad \left|\mathcal{H}^d\left|\mathcal{O}_i\right|-\mu_i\mathcal{H}^d(\mathcal{O})\right|<\varepsilon.
	\end{align*}}
\end{itemize}
\end{lemma}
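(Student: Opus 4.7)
The plan is to implement the standard cutoff-of-plane-wave construction, upgraded to handle the linear source term via the anti-divergence operator of Lemma \ref{lm3.2}. Throughout, fix $\delta > 0$ small (to be chosen last). Since $\bar w = w_2 - w_1 \in \Lambda$, Lemma \ref{lm3.1} supplies a direction $\eta \in \mathbb R^d \setminus \{0\}$ and a second-order linear differential operator $\mathcal L_{\bar w}$ such that $\mathcal L_{\bar w}[\phi]$ is a subsolution of the source-free system \eqref{eq9} for every $\phi \in C^\infty$ and $\mathcal L_{\bar w}[H(x\cdot\eta)] = \bar w H''(x\cdot\eta)$.

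Choose a smooth $2\pi$-periodic $H$ with $\int_0^{2\pi} H'' = 0$ whose second derivative is $\delta$-close to $-\mu_2$ on an interval of relative length $\mu_1$ and $\delta$-close to $\mu_1$ on an interval of relative length $\mu_2$ (with transitional layers of total relative measure less than $\delta$). Then $w + \bar w H''$ stays $O(\delta)$-close to either $w_1$ or $w_2$ off the transition layers, since $w + (-\mu_2)\bar w = w_1$ and $w + \mu_1 \bar w = w_2$. Pick a cutoff $\chi_\delta \in C_c^\infty(\mathcal O;[0,1])$ equal to $1$ on a subset of measure at least $\mathcal H^d(\mathcal O) - \delta$. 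Set $\phi_N(x) := N^{-2}\chi_\delta(x) H(Nx\cdot\eta) \in C_c^\infty(\mathcal O)$ and $\hat w_N := \mathcal L_{\bar w}[\phi_N]$. By Lemma \ref{lm3.1}(b,c) and the Leibniz rule,
\[
\hat w_N(x) = \chi_\delta(x)\,\bar w\,H''(Nx\cdot\eta) + R_N(x), \qquad \|R_N\|_{L^\infty} \le C/N,
\]
where $R_N \in C_c^\infty(\mathcal O)$ collects terms in which at least one derivative falls on $\chi_\delta$; in particular $\hat w_N$ is a compactly supported subsolution of \eqref{eq9} with some pressure $\hat q_N$.

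To turn $\hat w_N = (\hat v_N, \hat{\mathbf U}_N)$ into a subsolution of the source-present system \eqref{eq5}, set
\[
\tilde w_N := \bigl(\hat v_N,\; \hat{\mathbf U}_N + \mathcal R[\mathbf B \hat v_N]\bigr), \qquad \tilde q_N := \hat q_N,
\]
so that $\mathrm{div}\,\tilde{\mathbf U}_N + \nabla \tilde q_N = \mathbf B\tilde v_N$ by Lemma \ref{lm3.2}. To make the correction $\mathcal R[\mathbf B \hat v_N]$ both $O(1/N)$ in $L^\infty$ and compactly supported in $\mathcal O$, exploit the high-frequency structure: let $G$ be $2\pi$-periodic with $G^{(4)} = H''$, define $F_N(x) := \chi_\delta(x)\,\bar v\,G(Nx\cdot\eta)/(N|\eta|)^4$, and observe that $\hat v_N$ agrees with $\Delta^2 F_N$ modulo an error of order $1/N$ localised in $\mathcal O$; since $\mathbf B$ is constant, $\mathbf B \hat v_N = \Delta^2(\mathbf B F_N)$ plus such a remainder, and Lemma \ref{lm3.2}(b,c) then delivers $\mathcal R[\mathbf B \hat v_N] \in C_c^\infty(\mathcal O;\mathbb S_0^{d\times d})$ with $\|\mathcal R[\mathbf B \hat v_N]\|_{L^\infty} = O(1/N)$ (the leftover error is absorbed by iterating the same argument on the $O(1/N)$ remainder).

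For the final verification, taking $\delta \ll \varepsilon$ and $N$ large gives the pointwise expansion $\tilde w_N(x) = \chi_\delta(x)\bar w H''(Nx\cdot\eta) + O(\delta) + O(1/N)$. Since $\bar w H''(t)$ lies in the segment joining $-\mu_2 \bar w$ and $\mu_1 \bar w$ (up to $O(\delta)$), property (a) follows. For (b), let $\mathcal O_i \subset \{\chi_\delta = 1\}$ be the open set on which $H''(Nx\cdot\eta)$ is $\delta$-close to $-\mu_2$ if $i=1$ or to $\mu_1$ if $i=2$; on $\mathcal O_i$ the expansion gives $|w + \tilde w_N - w_i| < \varepsilon$, and the equidistribution of $Nx\cdot\eta \bmod 2\pi$ over $\mathcal O$, combined with the choice of $H$, yields $|\mathcal H^d(\mathcal O_i) - \mu_i \mathcal H^d(\mathcal O)| < \varepsilon$. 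The main difficulty in this plan is the source correction above: the operator $\mathcal R$ of Lemma \ref{lm3.2} yields compact support only for images of $\Delta^2$ (property (c)) and has no built-in smallness for oscillatory inputs (property (d) is a mere $L^\infty$ bound). The resolution is to rewrite the high-frequency velocity in the form $\Delta^2 F_N$ with $F_N \in C_c^\infty(\mathcal O)$ of amplitude $O(N^{-4})$, which is exactly what makes properties (b) and (c) of Lemma \ref{lm3.2} fire together.
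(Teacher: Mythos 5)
Your proposal follows the same overall strategy as the paper (cut off a high-frequency plane wave, then use the anti-divergence operator $\mathcal R$ of Lemma \ref{lm3.2} to absorb the source-term error), but you arrive at the required $\Delta^2$-structure differently, and that difference opens a genuine gap.

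The paper does not go through $\mathcal L_{\bar w}$ at all. Instead it defines the first-order approximation by pre-composing with a triple Laplacian: $v'=\lambda^{-6}\Delta^3[\bar v h_6(\lambda\xi\cdot x)\phi]$, and similarly for $\mathbf U'$ and $q'$, where $h_6$ is the sixth antiderivative of the profile $h_0$. Because every correction ($\mathbf B v'$, $\mathbf B v''$, $\nabla q'$, $\nabla\cdot\mathbf U'$, and the divergence correction $v''=-\nabla\Delta^{-1}\nabla\cdot v'$ itself) inherits at least a $\Delta^2$ prefactor, and because the wave-cone relations $\bar v\cdot\xi=0$ and $\bar{\mathbf U}\xi+\bar q\xi=0$ kill the principal oscillatory terms, the full residual fed into $\mathcal R$ is \emph{exactly} $\lambda^{-6}\Delta^2 f$ with $f\in C_c^\infty$ of controlled size. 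Lemma \ref{lm3.2}(b,c) then delivers compact support and $O(\lambda^{-1})$ smallness in one shot, with no remainder left over. The price is that $v'$, $\mathbf U'$, $q'$ are no longer a subsolution on the nose, so the divergence of $v'$ has to be corrected separately; that this correction is local again hinges on the $\Delta^3$ and $\bar v\cdot\xi=0$.

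Your route starts from $\hat w_N=\mathcal L_{\bar w}[\phi_N]$, which has the advantage of already being a subsolution of the source-free system (so $\nabla\cdot\hat v_N=0$ comes for free), but the cost is that $\hat v_N$ has no built-in $\Delta^2$ structure. The gap is precisely where you acknowledge it: $\hat v_N=\Delta^2 F_N+r_N$ with $\|r_N\|_{L^\infty}=O(1/N)$, and applying $\mathcal R$ to $\mathbf B r_N$ gives something small in $C^\alpha$ by Lemma \ref{lm3.2}(d), but \emph{not compactly supported in $\mathcal O$}, since $\mathcal R$ is nonlocal on general inputs; only images of $\Delta^2$ inherit the support of $f$. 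So $\tilde w_N\notin C_c^\infty(\mathcal O)$, which violates the conclusion of the lemma. Your proposed remedy --- ``iterate the same argument on the $O(1/N)$ remainder'' --- can in principle be made into a convergent Neumann-type series, since the remainder retains the frequency-$N$ structure and each iterate shrinks by another $1/N$, but as written it is a plan, not a proof: you need to show that each $r_N^{(k)}$ decomposes again as $\Delta^2 F_N^{(k+1)}+r_N^{(k+1)}$ with $F_N^{(k+1)}\in C_c^\infty(\mathcal O)$ and a uniform geometric gain, and that the resulting series of corrections converges in $C^1$. The paper sidesteps all of this by putting $\Delta^3$ in the potential from the outset, so no iteration is needed.

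Two smaller remarks. First, your correction $\tilde{\mathbf U}_N=\hat{\mathbf U}_N+\mathcal R[\mathbf B\hat v_N]$ uses the fact that $\mathcal L_{\bar w}[\phi_N]$ is a subsolution of \eqref{eq9} together with linearity of $\mathcal R$ --- that part is fine. Second, the direction $\eta$ from Lemma \ref{lm3.1}(a) does satisfy $\bar v\cdot\eta=0$ (since the plane wave is divergence-free for all profiles), which you implicitly use when writing $\hat v_N\approx\Delta^2 F_N$ with $F_N$ built from $\bar v\,G(Nx\cdot\eta)$; it would be worth stating this explicitly. If you either carry out the iteration rigorously, or adopt the paper's $\Delta^3$ scaffolding directly in place of $\mathcal L_{\bar w}$, the argument closes.
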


\begin{proof}
	Let $\delta$ be a small positive constant and $\phi\in C_c^{\infty}$ be a smooth cut-off function satisfying
	\begin{align*}
		0\leq\phi\leq 1\quad \text{and}\quad \mathcal{H}^d\left(\{x\in\mathcal{O}:\phi(x)\neq 1\}\right)<\delta,
	\end{align*}
	and let
	\begin{align*}
		h(s)=\begin{cases}
			-\mu_2,\ \ \ &s\in(0,\mu_1],\\
			\mu_1,\ \ \ &s\in(\mu_1,1].
		\end{cases}
	\end{align*}
	Clearly, $\int_0^1h(s)ds=-\mu_2\cdot \mu_1+\mu_1\cdot(1-\mu_1)=0$. 
	
	\quad Extend $h$ as a periodic function with period $1$ and let $h_0$ be a smooth approximation of $h$ satisfying
	\begin{align*}
		-\mu_2\leq h_0\leq \mu_1,\quad \mathcal{H}^1\left(\{s\in[0,1]:h(s)\neq h_0(s)\}\right)<\delta,\quad \int_0^1h_0(s)ds=0.
	\end{align*}
	Define $h_k$ by induction as
	\begin{align*}
		\tilde{h}_{k+1}(s)=\int_0^sh_k(t)dt,\quad h_{k+1}(s)=\tilde{h}_{k+1}(s)-\int_0^1\tilde{h}_{k+1}(t)dt.
	\end{align*}
	Hence, for $k\geq 0$, we have
	\begin{align*}
		\dfrac{d^jh_k}{ds^j}=h_{k-j}, \quad \int_0^1h_k(s)ds=\int_0^1\left(\tilde{h}_k(s)-\int_0^1\tilde{h}_k(t)dt\right)ds=0,
	\end{align*}
	and
	\begin{align*}
		\|h_k\|_{L^{\infty}}\leq\|h_{k-1}\|_{L^{\infty}}\leq\cdots\leq\|h_0\|_{L^{\infty}}.
	\end{align*}
	\quad Since $\bar{w}=w_2-w_1=\left(\bar{v},\bar{\mathbf{U}}\right)\in\Lambda$, by Lemma \ref{lm3.1}, there exists a $\xi\in\mathbb{R}^d\backslash \{0\}$ such that $\bar(w)h(x\cdot \xi)$ is a subsolution. Let $\bar{q}$ be the corresponding function such that $\bar{\mathbf{U}}\eta+\bar{q}\eta=0$. For a given large constant $\lambda\in\mathbb{R}$, let
	\begin{align*}
		v'=\lambda^{-6}\Delta^3[\bar{v}h_6(\lambda\xi\cdot x)\phi],\quad \mathbf{U}'=\lambda^{-6}\Delta^3[\bar{\mathbf{U}}h_6(\lambda\xi\cdot x)\phi],\quad
		q'=\lambda^{-6}\Delta^3[\bar{q}h_6(\lambda\xi\cdot x)\phi],
	\end{align*}
	and
	\begin{align*}
		v''=-\nabla\Delta^{-1}\nabla \cdot v',\quad \mathbf{U}''=\mathcal{R}[\mathbf{B}(v'+v'')-\nabla q'-\nabla\cdot \mathbf{U}'].
	\end{align*}
	\quad Define
	\begin{align*}
		w=(v,\mathbf{U})=(v'+v'',\mathbf{U}'+\mathbf{U}'').
	\end{align*}
	  It suffices to show that $w$ is the function which satisfies the properties of this lemma.
	
	\quad Direct computation shows that
	\begin{align*}
		\nabla\cdot v=\nabla\cdot(v'+v'')=\nabla\cdot v'-\nabla\cdot\nabla\Delta^{-1}\nabla\cdot v'=0,
	\end{align*}
	and
	\begin{align*}
		\nabla\cdot \mathbf{U}=\nabla \cdot \mathbf{U}'+\nabla\cdot\mathcal{R}[\mathbf{B}v-\nabla q'-\nabla\cdot \mathbf{U}']=\nabla \cdot \mathbf{U}'+\mathbf{B}v-\nabla q'-\nabla\cdot \mathbf{U}'=\mathbf{B}v-\nabla q'.
	\end{align*}

	Hence, $(v,\mathbf{U})$ is a subsolution to the system \eqref{eq1} with $q=q'=\lambda^{-6}\Delta^3[\bar{q}h_6(\lambda\xi\cdot x)\phi]$. Furthermore,
	\begin{align*}
		v'&=\bar{v}h_0(\lambda\xi\cdot x)\phi+\lambda^{-6}\bar{v}\underset{\left|\beta\right|\geq 1,\left|\alpha+\beta\right|=6}{\Sigma}C_{\alpha,\beta}\partial_x^{\alpha}h_6(\lambda\xi\cdot x)\partial_x^{\beta}\phi\notag\\
		&=\bar{v}h_0(\lambda\xi\cdot x)\phi+\lambda^{-6}\bar{v}\underset{\left|\beta\right|\geq 1,\left|\alpha+\beta\right|=6}{\Sigma}\lambda^{\left|\alpha\right|}C_{\alpha,\beta}h_{6-\left|\alpha\right|}(\lambda\xi\cdot x)\partial_x^{\beta}\phi,
	\end{align*}
	and
	\begin{align*}
		\mathbf{U}'=\bar{\mathbf{U}}h_0(\lambda\xi\cdot x)\phi+\lambda^{-6}\bar{\mathbf{U}}\underset{\left|\beta\right|\geq 1,\left|\alpha+\beta\right|=6}{\Sigma}\lambda^{\left|\alpha\right|}C_{\alpha,\beta}\xi^{\alpha}h_{6-\left|\alpha\right|}(\lambda\xi\cdot x)\partial_x^{\beta}\phi.
	\end{align*}
	Since $\text{supp}\ (n',\mathbf{U}')\subset \text{supp}\ \phi$, and $\int(v',\mathbf{U}')dx=0$, then one has
	\begin{align*}
		\|(v',\mathbf{U}')-(\bar{v},\bar{\mathbf{U}})h_0(\lambda\xi\cdot x)\phi\|_{\mathcal{L}^{\infty}}\leq C(\left|\bar{v}\right|,\left|\bar{\mathbf{U}}\right|,\phi,h_0)\lambda^{-1}.
	\end{align*}
	It follows from $\bar{v}\cdot \xi=0$ that one has
	\begin{align*}
		v''=-\lambda^{-6}\nabla\Delta^{-1}\Delta^{3}\nabla\cdot[\bar{v}h_6(\lambda\xi\cdot x)\phi]=-\lambda^{-6}\nabla\Delta^2[(\bar{v}\cdot \nabla \phi)h_6(\lambda\xi\cdot x)].
	\end{align*}
	Therefore, $\text{supp}\ v''\subset \text{supp}\ \phi$, $\int v''dx=0$ and
	\begin{align*}
		\|v''\|_{L^{\infty}}\leq C(\left|\bar{v}\right|,\phi,h_0)\lambda^{-1}.
	\end{align*}
	As for $\mathbf{U}''$, since $\bar{\mathbf{U}}\cdot \xi+\bar{q}\xi=0$, one can compute directly that
	\begin{align*}
		\mathbf{U}''=&-\mathcal{R}[\nabla\cdot \mathbf{U}'-\mathbf{B}v+\nabla q']\\
		=&-\lambda^{-6}\mathcal{R}\{\Delta^3[\nabla\cdot(\bar{\mathbf{U}}h_6(\lambda\xi\cdot x)\phi)]-\mathbf{B}\Delta^3[\bar{v}h_6(\lambda\xi\cdot x)\phi]\\
            &+\mathbf{B}\nabla\Delta^2[(\nabla\phi\cdot\bar{v})h_6(\lambda\xi\cdot x)]+\Delta^3[\nabla\cdot(\bar{q}h_6(\lambda\xi\cdot x)\phi \mathbf{I}_d)]\}\\
		=&-\lambda^{-6}\mathcal{R}\{\Delta^3[\bar{\mathbf{U}}\cdot \nabla\phi h_6(\lambda \xi\cdot x)]-\Delta^2\mathbf{B}[\nabla((\nabla\phi\cdot\bar{v})h_6(\lambda\xi\cdot x))\\
            &-\Delta(\bar{v}h_6(\lambda\xi\cdot x)\phi)]+\Delta^3[\bar{q}\cdot\nabla\phi h_6(\lambda\xi\cdot x)\mathbf{I}_d]\}.
	\end{align*}
	Then,
	\begin{align*}
		\mathbf{U}''=-\lambda^{-6}\mathcal{R}[\Delta^2f],
	\end{align*}
	where
	\begin{align*}
		f=\Delta[\bar{\mathbf{U}}\cdot \nabla\phi h_6(\lambda \xi\cdot x)]-\mathbf{B}[\nabla((\nabla\phi\cdot\bar{v})h_6(\lambda\xi\cdot x))-\Delta(\bar{v}h_6(\lambda\xi\cdot x)\phi)]+\Delta[\bar{q}\cdot\nabla\phi h_6(\lambda\xi\cdot x)\mathbf{I}_d].
	\end{align*}
	Hence, $\text{supp }\mathbf{U}''\subset \text{supp } f\subset \text{supp } \phi$, and $\int_{\mathbb{R}^n}\mathbf{U}''dx=0$. Furthermore, 
	\begin{align*}
		\|\mathbf{U}''\|_{L^{\infty}(\mathcal{Q})}=\lambda^{-6}\|\mathcal{R}[\Delta^2f]\|_{L^{\infty}}\leq C(\left|\bar{v}\right|,\left|\bar{\mathbf{U}}\right|,\phi,h_0)\lambda^{-1}.
	\end{align*}
	\quad Then, we conclude that
	\begin{align*}
		\|w(x)-\bar{w}h_0(\lambda\xi\cdot x)\phi(x)\|_{L^{\infty}}\leq C(\bar{w},\phi,h_0)\lambda^{-1}.
	\end{align*}
	For $x\in Q$, it holds that $\bar{w}(h_0\phi)(x)\in[-\mu_2\bar{w},\mu_1\bar{w}]$. Thus,
	\begin{align*}
		\text{dist}(w,[w_1,w_2])&\leq \text{dist}(\bar{w}h_0\phi,[w_1,w_2])+\left|w-\bar{w}h_0(\lambda\xi\cdot x)\phi\right|\\
		&\leq C(\bar{w},\phi,h_0)\lambda^{-1}.
	\end{align*}
	Define the disjoint open sets $\mathcal{O}_i$ as
	\begin{align*}
		\mathcal{O}_i=\left\{x\in\mathcal{O}:\left|\bar{w}h_0(\lambda\xi\cdot x)\phi(x)-w_i\right|<\min\left(\dfrac{\varepsilon}{2},\dfrac{\left|w_2-w_1\right|}{4}\right)\right\}.
	\end{align*}
	
	Thus, for $x\in\mathcal{O}_i$,
	\begin{align*}
		\left|w(x)-w_i\right|&\leq\left|\bar{w}h_0(\lambda\xi\cdot x)\phi-w_i\right|+\left|w-\bar{w}h_0(\lambda\xi\cdot x)\phi\right|\\
		&\leq\dfrac{\varepsilon}{2}+C(\bar{w},\phi,h_0)\lambda^{-1}.
	\end{align*}
	
	Therefore, if $\delta$ is small enough and $\lambda$ is large enough, all the properties hold. This finishes the proof of the lemma.
\end{proof}

\quad With these localized plane waves, one can show that the wave cone has the following properties.

\begin{lemma}\label{lm3.4} Let $\Lambda$ be defined as \eqref{eq10}, $Q_1=(0,1)^d$ be the open unit cube. Then there exists a constant $C>0$ such that for any $(\bar{v},\bar{\mathbf{U}})\in\Lambda$, there exist a sequence of subsolutions $(v_k,\mathbf{U}_k)\in C_c^{\infty}(Q;\mathbb{R}^d\times \mathbb{S}_0^{d\times d})$ such that
\begin{enumerate}
    \item [(a)] {$\text{dist}((v_k,U_k),[-(\bar{v},\bar{\mathbf{U}}),(\bar{v},\bar{\mathbf{U}})])\rightarrow 0$ uniformly in $Q_1$ as $k\rightarrow \infty$;}
    \item [(b)] {$(v_k,U_k)\rightarrow 0$ in the sense of distribution as $k\rightarrow \infty$;}
    \item [(c)] {$\int_{Q_1}\left|v_k,U_k\right|^2(x)dx\geq C\left|(\bar{v},\bar{\mathbf{U}})\right|^2$ for all $k\in\mathbb{N}$.}
\end{enumerate}
\end{lemma}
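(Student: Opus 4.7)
The plan is to apply Lemma \ref{lm3.3} directly with the choices $w_1 = -(\bar{v},\bar{\mathbf{U}})$, $w_2 = (\bar{v},\bar{\mathbf{U}})$, $\mu_1 = \mu_2 = 1/2$, and $\mathcal{O} = Q_1$. With these parameters, $\bar{w} := w_2 - w_1 = 2(\bar{v},\bar{\mathbf{U}})$ lies in $\Lambda$ since the wave cone is invariant under positive scaling, while the convex combination $w := \mu_1 w_1 + \mu_2 w_2 = 0$ vanishes. Lemma \ref{lm3.3} then produces, for every $\varepsilon > 0$, a subsolution $\tilde{w} \in C_c^\infty(Q_1; \mathbb{R}^d \times \mathbb{S}_0^{d \times d})$ satisfying $\mathrm{dist}(\tilde{w}, [-(\bar{v},\bar{\mathbf{U}}),(\bar{v},\bar{\mathbf{U}})]) < \varepsilon$ on $Q_1$, together with disjoint open sets $\mathcal{O}_1, \mathcal{O}_2 \subset Q_1$ on which $\tilde{w}$ is $\varepsilon$-close to $\mp(\bar{v},\bar{\mathbf{U}})$ respectively, with measures within $\varepsilon$ of $1/2$.

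To obtain the sequence, I pick $\varepsilon_k \downarrow 0$ and let $(v_k,\mathbf{U}_k)$ be the subsolution produced by Lemma \ref{lm3.3} at level $\varepsilon_k$. Property (a) is then exactly the first conclusion of Lemma \ref{lm3.3}. For property (c), on the union $\mathcal{O}_1^k \cup \mathcal{O}_2^k$ one has $|(v_k,\mathbf{U}_k)| \geq |(\bar{v},\bar{\mathbf{U}})| - \varepsilon_k$, and $\mathcal{H}^d(\mathcal{O}_1^k \cup \mathcal{O}_2^k) \geq 1 - 2\varepsilon_k$, so that
\[
   \int_{Q_1} |(v_k,\mathbf{U}_k)|^2\, dx \;\geq\; \bigl(|(\bar{v},\bar{\mathbf{U}})| - \varepsilon_k\bigr)^2 (1 - 2\varepsilon_k) \;\geq\; C\, |(\bar{v},\bar{\mathbf{U}})|^2
\]
for $k$ large enough, with $C$ an absolute constant.

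For the distributional convergence in (b), I would reopen the construction inside the proof of Lemma \ref{lm3.3}: the subsolution produced there is, up to an error of order $\lambda_k^{-1}$ in $L^\infty$, of the form $\bar{w}\, h_0(\lambda_k \xi \cdot x)\, \phi_k(x)$, where $h_0$ has zero mean on its period and $\phi_k$ is a fixed cut-off supported in $Q_1$. Choosing the frequency parameter $\lambda_k \to \infty$ in tandem with $\varepsilon_k \to 0$, the Riemann--Lebesgue lemma applied to the oscillating factor gives
\[
   \int_{Q_1} \bar{w}\, h_0(\lambda_k \xi \cdot x)\, \phi_k(x)\, \psi(x)\, dx \;\longrightarrow\; 0 \qquad \text{for every } \psi \in C_c^\infty(Q_1),
\]
while the $O(\lambda_k^{-1})$ remainder tends to zero uniformly. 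Hence $(v_k,\mathbf{U}_k) \to 0$ in $\mathcal{D}'(Q_1)$, which is (b).

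The only real obstacle is bookkeeping for (b): Lemma \ref{lm3.3} is stated merely as an $L^\infty$-closeness estimate to a pointwise profile and does not explicitly record weak convergence. One therefore has to track each of the pieces $v', v'', \mathbf{U}', \mathbf{U}''$ built in that proof and check that they all split as a mean-zero high-frequency oscillation plus a term of size $O(\lambda^{-1})$ in $L^\infty$; the operator $\mathcal{R}$ from Lemma \ref{lm3.2} preserves this structure because its $L^\infty$ estimate provides the needed $\lambda^{-1}$ decay on $\mathbf{U}''$. Once this is in place, (a), (b), and (c) all fall out by jointly sending $\varepsilon_k \to 0$ and $\lambda_k \to \infty$ in the construction of Lemma \ref{lm3.3}, and the constant $C$ in (c) is independent of $(\bar{v},\bar{\mathbf{U}})$ by homogeneity.
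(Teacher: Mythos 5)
Your proof is correct but takes a genuinely different route from the paper's. The paper keeps Lemma~\ref{lm3.3} closer to a black box: it applies it on the \emph{shrinking} cube $Q_{2^{-k}}$ with $\varepsilon=2^{-kd}/k$, then tiles $Q_1$ with $2^{kd}$ disjoint translates of the resulting localized wave $\tilde w_k$. Properties (a) and (c) come out by summing, and the distributional convergence (b) follows because $\text{diam}(\text{supp}\,\tilde w_k)\to 0$ together with the (implicit, but present in the construction) zero-mean property $\int\tilde w_k\,dx=0$ forces the periodized sum to wash out against any test function. You instead keep the domain fixed at $Q_1$ and send the internal frequency parameter $\lambda_k\to\infty$ directly, using Riemann--Lebesgue to kill the oscillation. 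That is logically sound, but note that it forces you to reopen Lemma~\ref{lm3.3}'s proof to extract that the subsolution decomposes as $\bar w\,h_0^{(k)}(\lambda_k\xi\cdot x)\phi_k(x)+O(\lambda_k^{-1})$ with $h_0^{(k)}$ zero-mean periodic and uniformly bounded; since both $h_0^{(k)}$ and $\phi_k$ depend on $k$ (through $\delta_k\to 0$), a direct appeal to Riemann--Lebesgue is slightly imprecise, and the clean way to close it is integration by parts against the bounded antiderivative $H^{(k)}$ of $h_0^{(k)}$, observing that $\|\nabla(\phi_k\psi)\|_{L^1}$ stays bounded as $\delta_k\to 0$. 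The paper's tiling argument trades that bookkeeping for a scaling argument, and also silently relies on the zero-mean property, so neither route treats Lemma~\ref{lm3.3} as a true black box; your version is arguably more direct for (a) and (c) but heavier for (b), while the paper's is the reverse.
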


\begin{proof}
    Let $\bar{w}=(\bar{v},\bar{\mathbf{U}})\in\Lambda$. For each $k\in\mathbb{N}$, let $w=0=-\bar{w}/2+\bar{w}/2$, $\varepsilon=2^{-kd}/k$, $\mathcal{O}=Q_{2^{-k}}$, applying Lemma \ref{lm3.3}, one can obtain a sequence of subsolutions $\tilde{w}_k\in C_c^{\infty}(Q_{2^{-k}})$ with corresponding $\tilde{q}_k$ satisfying
    \begin{align*}
        \text{dist}(\tilde{w}_k,[-\bar{w},\bar{w}])<\frac{2^{-kd}}{k}.
    \end{align*}
    Furthermore, there exist $\mathcal{O}_i^{(k)}$ such that
    \begin{align*}
        \left|\tilde{w}_k-(-1)^i\bar{w}\right|<\frac{2^{-kd}}{k} \text{ for}\ x\in \mathcal{O}_i,\ \left|\mathcal{H}^d(\mathcal{O}^{(k)}_i)-2^{-kd-1}\right|<\frac{2^{-kd}}{k}.
    \end{align*}
    \quad Let $\{Q_{2^{-k}}(x^{(j,k)})\}$ be the decomposition of the unit cube $Q_1$ into mutually disjoint cubes with length $2^{-k}$. Denote
    \begin{align*}
        w_k=\mathop{\sum}\limits_{j=1}^{2^{kd}}\tilde{w}_k(x-x^{(j,k)}),\ q_k=\mathop{\sum}\limits_{j=1}^{2^{kd}}\tilde{q}_k(x-x^{(j,k)}).
    \end{align*}
    Note that $\text{supp}(\tilde{q}_k)\in Q_{2^{-k}}(x^{(j,k)})$ since $\tilde{w}_k\in C_c^{\infty}(Q_{2^{-k}}(x^{(j,k)}))$ are subsolutions with corresponding $\tilde{q}_k$. Hence, $w_k$ is also a subsolution, and $\text{dist}(w_k,[-\bar{w},\bar{w}])\rightarrow 0$ as $k\rightarrow \infty$. Furthermore, since $\text{diam}(\text{supp }\tilde{w}_k)\leq \sqrt{n}2^{-k}\rightarrow 0$, then $w_k\rightarrow 0$ in the sense of distribution. By direct computations,
    \begin{align*}
        \int_{Q_1}\left|w_k\right|^2-\left|\bar{w}\right|^2&=\mathop{\sum}\limits_{j=1}^{2^{kd}}\int\left|w_k\right|^2-\left|\bar{w}\right|^2=2^{kd}\int_{Q_{2^{-k}}}\left|\tilde{w}_k\right|^2dx-\left|\bar{w}\right|^2\\
       &=\frac{1}{\left|Q_{2^{-k}}\right|}\int_{Q_{2^{-k}}}\left|\tilde{w}_k\right|^2-\left|\bar{w}\right|^2dx\\
       &=\frac{1}{\left|Q_{2^{-k}}\right|}\left(\mathop{\sum}\limits_{i=1}^{2}\int_{\mathcal{O}_i}\left|\tilde{w}_k\right|^2-\left|\bar{w}\right|dx+\int_{Q_{2^{-k}}\backslash(\mathcal{O}_1\cup\mathcal{O}_2)}\left|\tilde{w}_k\right|^2dx-\left|\bar{w}\right|^2dx\right)\\
       &<C\frac{1}{k}.
    \end{align*}
    Then, for sufficiently large $k$, it follows that
    \begin{align*}
        \int_{Q_1}\left|w_k\right|^2dx\geq\frac{1}{2}\left|\bar{w}\right|^2.
    \end{align*}
\end{proof}

\quad We have shown that, even when the source terms are nontrivial, there exist abundant localized plane waves in $\Lambda-$direction. In the next two sections, we aim to construct suitable relaxation sets for convex integration scheme. This part closely follows \cite{MR3505175}. We continue to use the construction of the relaxation set and prove the compatibility with the wave cone under the appearance of the nontrivial source terms. For more details, we refer to Sections 4 and 5 in \cite{MR3505175}.

\section {The case $d\geq 3$}
\quad For given $r>0$, we call a line segment $\sigma \subset \mathbb{R}^{d}\times\mathbb{S}_0^{d\times d}$ to be admissible if
\begin{itemize}
	\item [(a)]{$\sigma$ is contained in the interior of $K_{r}^\text{co}$;}
	\item [(b)]{$\sigma$ is parallel to $(a,a\otimes a)-(b,b\otimes b)$ for some $a,b\in\mathbb{R}^{d}$ with $\left|a\right|^2=\left|b\right|^2=r$ and $b\neq \pm a$.}
\end{itemize}

\quad First, we have the following lemma.

\begin{lemma}\label{lm4.1} Let $d\geq 2$, then there exists a constant $C=C(d,r)>0$ such that for any $w=(v,\mathbf{U})\in\mathring{K}_r^\text{co}$, there exists an admissible line segment $\sigma=[w-\bar{w},w+\bar{w}]$, $\bar{w}=(\bar{v},\bar{\mathbf{U}})$ such that
\begin{align*}
	\left|\bar{v}\right|\geq C(r-\left|v\right|^2)\quad \text{and}\quad \text{dist}(\sigma,\partial K_r^{\text{co}})\geq\dfrac{1}{2}\text{dist}(w,\partial K_r^{\text{co}}).
\end{align*}
\end{lemma}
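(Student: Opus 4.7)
My plan is to exhibit an admissible direction $\bar w$ via an explicit two-point decomposition on the energy sphere and then verify both quantitative bounds by direct algebra using the matrix structure of $\mathring K_r^{\text{co}}$. From the explicit description \eqref{eq7}, the condition $w = (v, \mathbf{U}) \in \mathring{K}_r^{\text{co}}$ is equivalent to $M(w) := \frac{r}{d}\mathbf{I}_d - (v \otimes v - \mathbf{U})$ being strictly positive definite, and the distance $\text{dist}(w, \partial K_r^{\text{co}})$ is comparable (up to constants depending on $d$ and $r$) to $\lambda_{\min}(M(w))$. The trace identity $\text{tr}(M(w)) = r - |v|^2$ links the two natural scales appearing in the lemma.

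I parametrize the admissible direction as follows. When $v \neq 0$, I take $a = v + \sqrt{r-|v|^2}\, n$ and $b = v - \sqrt{r - |v|^2}\, n$ for a unit vector $n \in v^\perp$; then $|a|^2 = |b|^2 = r$ and $a \neq \pm b$, so $(a - b, a \otimes a - b \otimes b) = 2\sqrt{r-|v|^2}\,(n, v \otimes n + n \otimes v)$ is an admissible direction. Setting $\bar w = s\,(n, v \otimes n + n \otimes v)$, the key identity
\begin{align*}
(v + \rho s n) \otimes (v + \rho s n) - \bigl(\mathbf{U} + \rho s (v \otimes n + n \otimes v)\bigr) = (v \otimes v - \mathbf{U}) + \rho^2 s^2\, n \otimes n
\end{align*}
reduces admissibility of the segment $[w - \bar w, w + \bar w]$ to the single matrix inequality $s^2\, n \otimes n \preceq M(w)$, equivalently $s^2 \leq 1/(n^T M(w)^{-1} n)$. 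In the degenerate case $v = 0$ the above parametrization collapses, and one instead chooses two linearly independent unit vectors directly from a top eigenspace of $M(w)$ to form $a, b$, the analysis being analogous.

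I then choose $n$ and $s$ to realize both bounds simultaneously: pick $n$ in an eigenspace of $M(w)|_{v^\perp}$ carrying an eigenvalue of order $r - |v|^2$ (located via the trace identity combined with Cauchy interlacing on $v^\perp$), and set $s$ as the minimum of the maximal admissible value and $\tfrac{1}{2}\text{dist}(w, \partial K_r^{\text{co}})/|(n, v \otimes n + n \otimes v)|$; the triangle inequality then yields the second estimate $\text{dist}(\sigma, \partial K_r^{\text{co}}) \geq \tfrac{1}{2}\text{dist}(w, \partial K_r^{\text{co}})$. The principal obstacle is the first estimate $|\bar v| = s \geq C(d, r)(r - |v|^2)$: it demands that both $\text{dist}(w, \partial K_r^{\text{co}})$ and $n^T M(w) n$ be comparable (up to $d, r$-dependent constants) to the bulk quantity $r - |v|^2$. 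Propagating the trace identity through the eigenvector selection, together with the uniform bound $|v \otimes n + n \otimes v| \leq 2\sqrt{r}$, is the technical heart of the argument.
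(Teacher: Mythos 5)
The proposal contains a genuine gap. Your construction restricts the admissible direction to the form $\bar w = s(n, v\otimes n + n\otimes v)$ with $n \in v^{\perp}$ --- equivalently you take the two sphere points $a,b$ with midpoint exactly $v$. This has the pleasant algebraic consequence that the linear-in-$\rho$ term in $M(w+\rho\bar w)$ cancels, but it is too narrow, and the two quantitative requirements then become incompatible. First, the claim that one can select $n\in v^{\perp}$ with $n^T M(w)n$ of order $r-|v|^2$ is false: taking $d=2$, $r=1$, $v=(\varepsilon,0)$, $\mathbf{U}=\mathrm{diag}(c,-c)$ gives $M=\mathrm{diag}\bigl(\tfrac12-\varepsilon^2+c,\ \tfrac12-c\bigr)$, so the unique direction in $v^{\perp}$ carries eigenvalue $\tfrac12-c$, which can be made arbitrarily small while $r-|v|^2=1-\varepsilon^2$ stays of order one. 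The trace identity only lower-bounds the \emph{top} eigenvalue $\mu_1$ of $M(w)$, and Cauchy interlacing gives $\nu_1\geq\mu_2$, but $\mu_2$ has no lower bound in terms of $r-|v|^2$ --- $\mu_1$ can absorb the whole trace. Second, by setting $s\leq \tfrac12\,\mathrm{dist}(w,\partial K_r^{\mathrm{co}})/|(n,v\otimes n+n\otimes v)|$ you bound the segment length by $\tfrac12\mathrm{dist}(w,\partial K_r^{\mathrm{co}})$; since this distance tends to zero near $\partial K_r^{\mathrm{co}}$ independently of $r-|v|^2$, the estimate $|\bar v|\geq C(r-|v|^2)$ cannot be recovered.

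The key idea you are missing is that the condition $\mathrm{dist}(\sigma,\partial K_r^{\mathrm{co}})\geq\tfrac12\mathrm{dist}(w,\partial K_r^{\mathrm{co}})$ does \emph{not} force the segment to be short; a long segment lying roughly parallel to a nearby supporting hyperplane is perfectly fine. The paper's proof exploits this: writing $w=\sum\lambda_i w_i$ by Carath\'eodory with $w_i\in K_r$ and $\lambda_1$ maximal, the segment $\sigma_j=[w-\tfrac{\lambda_j}{2}(w_j-w_1),\,w+\tfrac{\lambda_j}{2}(w_j-w_1)]$ has its doubled endpoints $w\pm\lambda_j(w_j-w_1)$ still inside $K_r^{\mathrm{co}}$ (they are convex combinations of the $w_i$), so if $B_\rho(w)\subset K_r^{\mathrm{co}}$ then the convex hull of $B_\rho(w)$ and those endpoints lies in $K_r^{\mathrm{co}}$ and contains $B_{\rho/2}(w')$ for every $w'\in\sigma_j$ --- with no upper bound on $|\bar w|$ in terms of $\rho$. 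The first estimate then follows from $|v-v_1|\leq N\max_i\lambda_i|v_i-v_1|$ and $|v-v_1|\geq\sqrt r-|v|$, with no orthogonality constraint between $\bar v$ and $v$. To salvage your approach you would need to drop the restriction $\tfrac{a+b}{2}=v$, handle the resulting linear-in-$\rho$ perturbation of $M$, and replace the triangle-inequality bound on $s$ with a genuine convexity argument of the above type.
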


\begin{proof}
Let $w=(v,\mathbf{U})\in\mathring{K}_r^{\text{co}}$, then, $(v,\mathbf{U})$ lies in the interior of a convex polytope of $\mathbb{R}^{d}\times\mathbb{S}_0^{d\times d}$ spanned by $N_{\ast}$ elements of $K_r$. Denote such elements by $w_i=(v_i,v_i\otimes v_i-\dfrac{\left|r\right|^2}{d}\mathbf{I}_d)$, where $v_i\in\mathbb{R}^{d}$ and $\left|v_i\right|=r$. Since $(v,u)$ is contained in the interior of the polytope, by possibly perturbing the $w_i$, we ensure that $v_i\neq \pm v_j$ whenever $i\neq j$.

\quad By Carath\'{e}odory's theorem, $w$ can be written as a positive convex combination of at most $N+1$ of the $w_i$, where $N=d(d+3)/2-1$, i.e.
\begin{align*}
	w=\mathop{\Sigma}\limits_{i=1}\limits^{k+1}\lambda_iw_i,
\end{align*}
where $\lambda_i\in(0,1)$, $\Sigma_{i=1}^{d+1}\lambda_i=1$, and $1\leq k\leq N$. Assume further that the coefficients are ordered so that $\lambda_1=\underset{i}{\max}\lambda_i$. For any $j>1$, consider the segment $\sigma_j=[w-\frac{\lambda_j}{2}(w_j-w_1),w+\frac{\lambda_j}{2}(w_j-w_1)]$, then
\begin{align*}
	\text{dist}(\sigma_j,\partial K_r^{\text{co}})\geq\dfrac{1}{2}\text{dist}(w,\partial K_r^{\text{co}}).
\end{align*}
In fact, if $B=B_{\rho}(w)\subset K_r^{\text{co}}$, then $K_r^{\text{co}}$ contains the convex hull of $B\cup\{w\pm\lambda_j(w_j-w_1)\}$, which obviously contains the open balls $B_{\rho/2}(w)$ for every $w\in\sigma_j$.

\quad On the other hand, $w-w_1=\mathop{\Sigma}\limits_{i=2}\limits^{k+1}\lambda_i(w_i-w_1)$, so that
\begin{align*}
	\left|v-v_1\right|\leq k\underset{i=2,3,...,k+1}{\max}\lambda_i\left|v_i-v_1\right|\leq N\underset{i=2,3,...,N+1}{\max}\lambda_i\left|v_i-v_1\right|.
\end{align*}
Let $j>1$ be such that $\lambda_j\left|v_i-v_1\right|=\underset{i=2,...,k+1}{\max}\lambda_i\left|v_i-v_1\right|$, and let
\begin{align*}
	(\bar{v},\bar{\mathbf{U}})=\dfrac{1}{2}\lambda_j(w_j-w_1)=\dfrac{1}{2}\lambda_j(v_j-v_1,v_j\otimes v_j-v_1\otimes v_1).
\end{align*}
Then, $\sigma=[(v,\mathbf{U})-(\bar{v},\bar{\mathbf{U}}),(v,\mathbf{U})+(\bar{v},\bar{\mathbf{U}})]$ is contained in the interior of $K_r^{\text{co}}$, hence it is an admissible segment. Moreover, by the choice of $j$, we have
\begin{align*}
	\dfrac{1}{4rN}(r^2-\left|v\right|^2)=\dfrac{1}{4rN}(r+\left|v\right|)(r-\left|v\right|)\leq\dfrac{1}{2N}\left|v-v_1\right|\leq\left|\bar{v}\right|.
\end{align*}
This finishes the proof of the lemma.
\end{proof}
\quad Furthermore, it should be noted that the admissible line segments are in $\Lambda-$directions in the case $d\geq 3$.

\begin{lemma}\label{lm4.2} Let $d\geq 3$, let $a,b\in\mathbb{R}^{d}$ be such that $\left|a\right|=\left|b\right|=r$ for some constant $r$, and $b\neq \pm a$, Let $(\bar{v},\bar{U})=(a,a\otimes a)-(b,b\otimes b)$. Then, $(\bar{v},\bar{\mathbf{U}})\in\Lambda$.
\end{lemma}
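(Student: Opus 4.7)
The plan is to exhibit an explicit choice of $\xi$ and $\bar{q}$ satisfying the conditions in the definition \eqref{eq10} of $\Lambda$. First I would observe that $\bar{v} = a - b \neq 0$ since $b \neq a$, and that $\bar{\mathbf{U}} = a\otimes a - b\otimes b$ is symmetric and trace-free (its trace equals $|a|^2 - |b|^2 = 0$), so $(\bar{v},\bar{\mathbf{U}})$ indeed lies in $(\mathbb{R}^d\setminus\{0\})\times \mathbb{S}_0^{d\times d}$.

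Next I would look for $\xi$ in the orthogonal complement of $\mathrm{span}\{a,b\}$. This is precisely where the hypothesis $d \geq 3$ enters: since $\dim\, \mathrm{span}\{a,b\} \leq 2$, the subspace $\{a,b\}^{\perp}$ has dimension at least $d-2 \geq 1$, hence contains some nonzero vector $\xi$. For this choice, $a\cdot\xi = b\cdot\xi = 0$, and one computes directly
\begin{align*}
\bar{v}\cdot \xi = (a-b)\cdot\xi = 0, \qquad \bar{\mathbf{U}}\xi = (a\otimes a - b\otimes b)\xi = a(a\cdot\xi) - b(b\cdot\xi) = 0.
\end{align*}
Thus setting $\bar{q} = 0$ yields $\bar{\mathbf{U}}\xi + \bar{q}\xi = 0$ and $\bar{v}\cdot \xi = 0$, which shows $(\bar{v},\bar{\mathbf{U}}) \in \Lambda$.

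There is no real obstacle in this argument; the entire content of the lemma is the dimension count that produces the vector $\xi$ orthogonal to both $a$ and $b$. It is worth remarking, however, that in dimension $d=2$ this approach fails because $\{a,b\}^{\perp} = \{0\}$ generically, so one would be forced to take $\xi$ merely orthogonal to $a-b$, yielding $a\cdot\xi = b\cdot\xi =: t$ and $\bar{\mathbf{U}}\xi = t(a-b)$; forcing this to be parallel to $\xi$ then requires the additional degenerate condition $t = 0$ or $a-b \parallel \xi$, which is why the lemma is stated for $d \geq 3$ and the two-dimensional case will need separate treatment later.
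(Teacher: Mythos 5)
Your proof is correct and takes essentially the same approach as the paper: choose $\xi\neq 0$ orthogonal to both $a$ and $b$ (possible since $d\geq 3$), then verify $\bar v\cdot\xi=0$, $\bar{\mathbf U}\xi=0$, and take $\bar q=0$. Your intermediate computation $(a\otimes a-b\otimes b)\xi = a(a\cdot\xi)-b(b\cdot\xi)$ is in fact cleaner than the paper's (which has a small algebraic slip in the displayed line, though the conclusion is unaffected), and your closing remark about the failure of the dimension count in $d=2$ is a helpful, accurate observation that anticipates why Section 5 treats that case separately.
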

\begin{proof}
	Since $d\geq 3$, there exists a $\xi\in\mathbb{R}^d\backslash\{0\}$, satisfying $\xi\cdot a=\xi\cdot b=0$. Then, by directly computations, we have
	\begin{align*}
		\xi\cdot \bar{v}=\xi\cdot (b-a)=\xi\cdot b-\xi\cdot a=0,\\
		\bar{\mathbf{U}}\cdot \xi=(a\otimes a-b\otimes b)\cdot \xi=\xi\cdot a(a-b)=0.
	\end{align*} 
	Hence, $(\bar{v},\bar{\mathbf{U}})\in\Lambda$ with associated $q=0$.
\end{proof}

\begin{lemma}\label{lm4.3} Let $d\geq 3$, $\mathcal{U}_r:=\mathring{K}_r^{\text{co}}$ satisfies the perturbation Property (P) and Property \eqref{eq8}.
\end{lemma}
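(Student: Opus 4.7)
The plan is to chain together Lemmas \ref{lm4.1}, \ref{lm4.2}, and \ref{lm3.3} directly. Property \eqref{eq8} is elementary: if $(v,\mathbf{U})\in K_r$ and $r<r'$, then by \eqref{eq7} one has $v\otimes v-\mathbf{U}=\frac{r}{d}\mathbf{I}_d$, which is strictly less than $\frac{r'}{d}\mathbf{I}_d$ in the sense of symmetric matrices, placing $(v,\mathbf{U})$ in $\mathring{K}_{r'}^{\mathrm{co}}=\mathcal{U}_{r'}$.

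For Property (P), given $\bar{w}=(\bar{v},\bar{\mathbf{U}})\in\mathcal{U}_r$, I would first apply Lemma \ref{lm4.1} to obtain an admissible segment $\sigma=[\bar{w}-\tilde{w},\bar{w}+\tilde{w}]$ with $|\tilde{v}|\geq C(r-|\bar{v}|^2)$ and $\mathrm{dist}(\sigma,\partial K_r^{\mathrm{co}})\geq\tfrac{1}{2}\mathrm{dist}(\bar{w},\partial K_r^{\mathrm{co}})$. By admissibility, $\tilde{w}$ is parallel to $(a,a\otimes a)-(b,b\otimes b)$ with $|a|=|b|=\sqrt{r}$ and $b\neq\pm a$; since $d\geq 3$, Lemma \ref{lm4.2} gives $\tilde{w}\in\Lambda$. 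Then I would apply Lemma \ref{lm3.3} with $w_1=\bar{w}-\tilde{w}$, $w_2=\bar{w}+\tilde{w}$, $\mu_1=\mu_2=\tfrac{1}{2}$, $\mathcal{O}=Q$, and $\varepsilon<\min\{\tfrac{1}{2}\mathrm{dist}(\sigma,\partial K_r^{\mathrm{co}}),\tfrac{1}{4}|\tilde{w}|\}$. The resulting $w\in C_c^\infty(Q;\mathbb{R}^d\times\mathbb{S}_0^{d\times d})$ stays $\varepsilon$-close to $[w_1,w_2]\subset\sigma$, which is safely inside $\mathring{K}_r^{\mathrm{co}}$, so $\bar{w}+w(x)\in\mathcal{U}_r$ pointwise. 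The zero-mean condition $\int_Q w\,dx=0$ follows from the explicit structure in the proof of Lemma \ref{lm3.3}: each constituent of $w$ is either $\Delta^3$ of a compactly supported function, a gradient of a compactly supported function, or of the form $\mathcal{R}[\Delta^2 f]$, all of which have vanishing integral (the last by Lemma \ref{lm3.2}(c)).

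It remains to establish the quantitative bound $\int_Q|w|^2\,dx\geq\Phi(\mathrm{dist}(\bar{w},K_r))$, which the remark after \eqref{eq7} lets us rephrase as $\int_Q|w|^2\,dx\geq\Phi(r-|\bar{v}|^2)$. On the sets $\mathcal{O}_1,\mathcal{O}_2$ produced by Lemma \ref{lm3.3}, the vector $w(x)$ is within $\varepsilon\leq\tfrac{1}{4}|\tilde{w}|$ of $\pm\tilde{w}$, so $|w(x)|\geq\tfrac{3}{4}|\tilde{w}|$ there, and $\mathcal{H}^d(\mathcal{O}_1\cup\mathcal{O}_2)\geq\tfrac{1}{2}$ (for $\varepsilon$ small enough). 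Combined with $|\tilde{w}|\geq|\tilde{v}|\geq C(r-|\bar{v}|^2)$ this yields $\int_Q|w|^2\,dx\gtrsim(r-|\bar{v}|^2)^2$, so the choice $\Phi(s)=c s^2$ with $c=c(d,r)>0$ completes Property (P).

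The only mildly subtle point I anticipate is that the internal parameters $\varepsilon,\delta,\lambda$ of Lemma \ref{lm3.3} depend on $\bar{w}$ (through $|\tilde{w}|$ and $\mathrm{dist}(\sigma,\partial K_r^{\mathrm{co}})$), and one must confirm this is compatible with the universal character required of $\Phi$. This is not a real obstruction because Property (P) allows the perturbation $w$ to depend on $\bar{w}$; only $\Phi$ itself must be universal, and the quadratic bound depends on $\bar{w}$ solely through the scalar $s=r-|\bar{v}|^2$. I do not expect any deeper difficulty, since the geometric work is encoded in Lemmas \ref{lm4.1}--\ref{lm4.2} and the analytic work in Lemma \ref{lm3.3}; Lemma \ref{lm4.3} is the direct combination.
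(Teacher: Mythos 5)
Your proof is correct and follows essentially the same path as the paper: Property \eqref{eq8} from the explicit description \eqref{eq7} of $K_r^{\mathrm{co}}$, and Property (P) by chaining Lemma \ref{lm4.1} (admissible segment with $|\tilde{v}|\gtrsim r-|\bar{v}|^2$), Lemma \ref{lm4.2} ($\Lambda$-direction), and the localized plane-wave construction to get a compactly supported, mean-zero perturbation with the quadratic $L^2$ lower bound. The only cosmetic difference is that you invoke Lemma \ref{lm3.3} directly with $w_1=\bar{w}-\tilde{w}$, $w_2=\bar{w}+\tilde{w}$, $\mathcal{O}=Q$, while the paper routes through Lemma \ref{lm3.4} (itself a tiling consequence of Lemma \ref{lm3.3}); your explicit verification of $\int_Q w\,dx=0$ from the potentials in Lemma \ref{lm3.3} and of the quantitative bound on $\mathcal{O}_1\cup\mathcal{O}_2$ is, if anything, slightly more detailed than the paper's.
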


\begin{proof}
Note that
\begin{align*}
    K_r^{\text{co}}=\left\{(v,u)\in\mathbb{R}^{n}\times\mathbb{S}_0^{n\times n}:v\otimes v-\mathbf{U}\leq\dfrac{r}{d}\mathbf{I}_d\right\}.
\end{align*}
Hence, Property \eqref{eq8} holds.

\quad To show Property (P), let $\bar{w}\in\mathcal{U}_r$, then by Lemma \ref{lm4.1} and Lemma \ref{lm4.2}, we can find $\tilde{w}\in\Lambda$ such that
\begin{align*}
	[\bar{w}-\tilde{w},\bar{w}+\tilde{w}]\subset\mathcal{U}_r,\ \text{dist}([\bar{w}-\tilde{w},\bar{w}+\tilde{w}],\partial K_r^{\text{co}})\geq\dfrac{1}{2}\text{dist}(\bar{w},\partial K_r^{\text{co}}).
\end{align*}
Furthermore,
\begin{align*}
	\left|\tilde{v}\right|\geq\dfrac{1}{4}C(r-\left|\bar{v}\right|^2).
\end{align*}
\quad Applying Lemma \ref{lm3.4} with $\varepsilon<\dfrac{1}{4}\text{dist}(\bar{w},\partial K_r^{\text{co}})$, we can construct a subsolution $w=(v,\mathbf{U})\in C_c^{\infty}(Q;\mathbb{R}^{d}\times\mathbb{S}_0^{d\times d})$ such that $\bar{w}+w(x)\in\mathcal{U}_r$ for all $x\in Q$ and
\begin{align*}
	\int_Q\left|w(x)\right|^2dx\geq\dfrac{1}{2}\left|\tilde{w}\right|^2\geq C'(r-\left|\bar{v}\right|^2)^2
\end{align*}
for some constant $C'>0$. Hence the Property (P) holds.
\end{proof}

\section {The case $d=2$}
\quad Consider the case $d=2$. Note that in this case, we cannot use Lemma \ref{lm4.2}. In order to ensure the relaxation set is neither too large nor too small, we need to set up suitable coordinates and construct the relaxation set explicitly. 

\quad Note that, in the case $d=2$, the wave cone $\Lambda$ can be written as
\begin{align*}
	\Lambda=\left\{(\bar{v},\bar{\mathbf{U}})\in(\mathbb{R}^2\backslash\{0\})\times\mathbb{S}_0^{2\times 2}:\bar{\mathbf{U}}\bar{v}\cdot \bar{v}^{\perp}=0\right\}.
\end{align*}
In fact, $(\bar{v},\bar{\mathbf{U}})\in \Lambda$ if $\mathbf{U}$ possesses an eigenvector perpendicular to $v$. In two dimensions, this means that $\bar{v}^{\perp}$ is an eigenvector of $v$.

\quad We set up new coordinates on the state-space $\mathbb{R}^2\times \mathbb{S}_0^{2\times 2}$. The state variables $(v,\mathbf{U})\in\mathbb{R}^2\times \mathcal{S}_0^{2\times 2}$ can be rewritten as
\begin{align*}
v=\left(\begin{array}{rr}
a\\
b
\end{array}\right),\ \mathbf{U}=\left(\begin{array}{rrr}
c & d\\
d & -c\\
\end{array}\right).
\end{align*}
Let
\begin{align*}
    z=a+ib,\ \zeta=c+id.
\end{align*}
then, we can write
\begin{align*}
    w=(z,\zeta)\in\mathbb{R}^2\times \mathbb{S}_0^{2\times 2}.
\end{align*}
Under this coordinates, we can rewrite the relaxation set and wave-cone as
\begin{align*}
    &K_r=\{(z,\zeta):\left|z\right|^2=r\text{ and }\zeta=\frac{1}{2}z^2\},\\
    &\Lambda=\{(z,\zeta):\text{Im}(z^2\bar{\zeta})=0\}.
\end{align*}
Since $K_r$ and $\Lambda$ are invariant under the transformations
\begin{align*}
    R_{\theta}:(z,\zeta)\mapsto(ze^{i\theta},\zeta e^{2i\theta}),\ \theta\in[0,2\pi],
\end{align*}
and
\begin{align*}
    (z,\zeta)\mapsto(\bar{z},\bar{\zeta}).
\end{align*}
It's natural to consider the three-dimensional subspace
\begin{align*}
    L=\{(z,\zeta)\in\mathbb{C}\times\mathbb{C}:\text{Im}(\zeta)=0\},
\end{align*}
where we can use the coordinates $(a+ib,c)\in\mathbb{C}\times \mathbb{R}\cong L$. Then, we have
\begin{align*}
    K_r\cap L=\{(\sqrt{r},\frac{1}{2}r),(-\sqrt{r},\frac{1}{2}r),(i\sqrt{r},-\frac{1}{2}r),(-i\sqrt{r},-\frac{1}{2}r)\},
\end{align*}
and
\begin{align*}
    \Lambda\cap L=\{(a+ib,c):abc=0\}.
\end{align*}
\quad For fixed $r>0$, we define
\begin{align*}
    f_r(a+ib,c):=\frac{\sqrt{r}\left|a\right|}{\frac{r}{2}+c}+\frac{\sqrt{r}\left|b\right|}{\frac{r}{2}-c},\quad \left|c\right|<\dfrac{r}{2},
\end{align*}
and set
\begin{align*}
    V_r=\{(z,c)\in L:f_r(z,c)<1,\left|c\right|<\frac{r}{2}\}.
\end{align*}
Moreover, define
\begin{align}\label{eq11}
    \mathcal{V}_r:=\{(ze^{i\theta},ce^{2i\theta})\in \mathbb{C}\times\mathbb{C}:(z,c)\in V_r,0<\left|c\right|<\frac{r}{2},\theta\in\mathbb{R}\}\quad\text{and}\quad\mathcal{U}_r:=\mathcal{V}_r^{\text{lc}},
\end{align}
where the lamination convex hull $\mathcal{U}^{\text{lc}}$ of the set $\mathcal{U}$ with respect to the wave-cone $\Lambda$ is defined as
\begin{align}\label{eq12}
    \mathcal{U}^{\text{lc}}:=\mathop{\cup}\limits_{i=0}^{\infty}\mathcal{U}^{(i)},
\end{align}
where
\begin{align*}
    &\mathcal{U}^{0}=\mathcal{U},\\
    &\mathcal{U}^{(i+1)}:=\mathcal{U}^{(i)}\cup\{t\xi+(1-t)\xi':\xi,\xi'\in\mathcal{U}^{(i},\xi-\xi'\in\Lambda,t\in[0,1]\}.
\end{align*}
\quad Under this definition, we have the following Lemma:
\begin{lemma}(\cite[Corollary 16]{MR3505175})
The set $\mathcal{U}_r$ satisfied the perturbation Property (P) and \eqref{eq8}.
\end{lemma}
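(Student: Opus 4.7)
The plan is to handle the two conditions separately. Property \eqref{eq8} is an explicit geometric check, while Property (P) is the substantive claim and is reduced to Lemma \ref{lm3.4} after producing a $\Lambda$-direction admissible segment through any $\bar w\in\mathcal{U}_r$.

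For \eqref{eq8}, I would use that both $K_r$ and $\mathcal{V}_{r'}$ are invariant under the rotations $R_\theta$, so it suffices to check that a single representative point of $K_r\cap L$ lies in $V_{r'}$. Taking $(\sqrt{r},r/2)\in L$, one has
\begin{equation*}
f_{r'}(\sqrt{r},r/2)=\frac{\sqrt{r'}\sqrt{r}}{r'/2+r/2}=\frac{2\sqrt{r r'}}{r+r'},
\end{equation*}
which is strictly less than $1$ for $r<r'$ by AM--GM, and $|r/2|<r'/2$. Hence $(\sqrt r,r/2)\in V_{r'}$, so $K_r\subset\mathcal{V}_{r'}\subset\mathcal{U}_{r'}$.

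For Property (P), fix $\bar w\in\mathcal{U}_r$. By definition \eqref{eq12} there is a least $i\geq 0$ with $\bar w\in\mathcal{U}^{(i)}$. If $i\geq 1$, then $\bar w$ lies on some non-degenerate segment $[\xi,\xi']$ with $\xi-\xi'\in\Lambda$ and $\xi,\xi'\in\mathcal{U}^{(i-1)}\subset\mathcal{U}_r$, so I immediately extract $\tilde w\in\Lambda$ with $[\bar w-\tilde w,\bar w+\tilde w]\subset\mathcal{U}_r$. If $i=0$, using rotational symmetry I may assume $\bar w\in V_r\subset L$; then, because $\Lambda\cap L=\{abc=0\}$ contains three coordinate planes, I would fix the $c$-coordinate and slide in one of the axes of $z$, producing a segment in a $\Lambda$-plane contained in $\{f_r<1,|c|<r/2\}$ near $\bar w$. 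The quantitative estimate $|\tilde v|\gtrsim r-|\bar v|^2$ comes from the fact that near $K_r$ the defining function $f_r$ is comparable to $|z|^2/r$ in the relevant coordinates, so $1-f_r(\bar w)\simeq (r-|\bar v|^2)/r$, and the length of the available $\Lambda$-segment scales with $1-f_r(\bar w)$. With this $\tilde w\in\Lambda$ in hand, Lemma \ref{lm3.4} (applied on each cube of a fine tiling of $Q$) yields a subsolution $w\in C_c^\infty(Q;\mathbb{R}^2\times\mathbb{S}_0^{2\times 2})$ with $\bar w+w(x)\in\mathcal{U}_r$, $\int_Q w\,dx=0$, and $\int_Q|w|^2\,dx\geq C|\tilde w|^2\geq C'(r-|\bar v|^2)^2$; combined with the estimate $\mathrm{dist}(\bar w,K_r)\leq\Psi(r-|\bar v|^2)$ this gives (P) with $\Phi(t)=C'(\Psi^{-1}(t))^2$.

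The main obstacle is the step $i=0$: for $\bar w\in\mathcal{V}_r\setminus K_r$ close to the boundary $\partial\mathcal{V}_r$, one must verify that the $\Lambda$-directions provided by $\Lambda\cap L=\{abc=0\}$ are still compatible with $f_r<1$, with segment length bounded below by $r-|\bar v|^2$ (and not merely by $\mathrm{dist}(\bar w,\partial\mathcal{V}_r)$). This is precisely why $\mathcal{V}_r$ is defined via the specific function $f_r$ with singular behaviour at $|c|=r/2$: that singularity is what makes the wave-cone admissible segments persist with the correct quantitative scaling as one approaches $K_r$. Once this geometric lemma is established, the rest of the argument is a direct application of the localized plane waves from Lemma \ref{lm3.3}--Lemma \ref{lm3.4}, which are unaffected by the source term $\mathbf{B}v$.
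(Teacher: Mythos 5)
The paper does not prove this lemma: it invokes it verbatim as \cite[Corollary 16]{MR3505175} and refers the reader to Sections~4--5 of that reference, so there is no in-house proof to compare against.

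Your argument for \eqref{eq8} is correct and complete: both $K_r$ and $\mathcal{V}_{r'}$ are unions of $R_\theta$-orbits, and since $K_r$ is a single orbit it suffices to check the representative $(\sqrt r, r/2)$. Here $f_{r'}(\sqrt r, r/2)=2\sqrt{rr'}/(r+r')<1$ by AM--GM when $r<r'$, and $0<r/2<r'/2$, so $(\sqrt r,r/2)\in V_{r'}$, giving $K_r\subset\mathcal{V}_{r'}\subset\mathcal{U}_{r'}$.

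For Property (P), however, the quantitative content---which is the entire point of Corollary~16---is not established. Your reduction to Lemma~\ref{lm3.4} through an admissible $\Lambda$-segment is the right strategy, but two steps are left open. First, for lamination depth $i\geq 1$, the centered segment $[\bar w-\tilde w,\bar w+\tilde w]$ extracted from the defining decomposition $\bar w=t\xi+(1-t)\xi'$ has $\left|\tilde w\right|=\min(t,1-t)\left|\xi-\xi'\right|$, which can be arbitrarily small; you give no mechanism for producing a different $\Lambda$-segment with $\left|\tilde v\right|\gtrsim r-\left|\bar v\right|^2$, and note that $\mathcal{U}^{\mathrm{lc}}$ being an infinite union means there is no uniform bound on depth, so an induction on $i$ will not by itself yield a uniform $\Phi$. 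Second, for the base case $i=0$ you explicitly flag the estimate $1-f_r(\bar w)\simeq(r-\left|\bar v\right|^2)/r$ together with the persistence of $\Lambda\cap L$-segments near $\partial V_r$ as ``the main obstacle'' and do not carry it out. The remainder (passing from a quantitative $\Lambda$-segment to (P) via the covering--rescaling application of the localized plane waves, the substitution of $r-\left|\bar v\right|^2$ for $\mathrm{dist}(\bar w,K_r)$ via $\Psi$, and the observation that the source term $\mathbf Bv$ does not affect the geometry of $\mathcal{U}_r$) is sound. As written, though, the proposal is a correct reduction to the geometric lemma that \cite[Corollary 16]{MR3505175} actually proves, not a proof of it.
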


\section{Proof of Theorem 1.1}
\quad The proof of the Theorem \ref{tm1.1} is divided into two parts. For the first part, we consider the case in periodic space.
\begin{proposition}\label{pp6.1}
    Let $d\geq 2$, let $v_0$ be a smooth stationary Euler flow on $\mathbb{T}^{d}$. Given a smooth function $e(x)>\left|v_0(x)\right|^2$ for $x\in\mathbb{T}^{d}$. Then, for any $\sigma>0$, there exist infinitely many weak solutions $v\in L^{\infty}(\mathbb{T}^{d};\mathbb{R}^d)$ to \eqref{eq1} such that $\left|v(x)\right|^2=e(x)$ and $\left\|v-v_0\right\|_{H^{-1}}\leq \sigma$.
\end{proposition}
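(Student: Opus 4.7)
The plan is to run the standard Baire-category convex integration scheme of De Lellis--Sz\'{e}kelyhidi and Choffrut--Sz\'{e}kelyhidi on the relaxation set $\mathcal{U}_{e(x)}$ from Sections 4 and 5, whose perturbation Property (P) and inclusion \eqref{eq8} have already been shown to be stable under the addition of the linear source term $\mathbf{B}v$. The natural seed is
\[
w_0 := \Bigl(v_0,\ v_0\otimes v_0 - \tfrac{|v_0|^2}{d}\mathbf{I}_d\Bigr),\qquad q_0 := p_0 + \tfrac{|v_0|^2}{d};
\]
a direct computation shows that $w_0$ is a smooth subsolution of \eqref{eq5}, and since $|v_0|^2<e$ pointwise, Property \eqref{eq8} places $w_0(x)\in \mathcal{U}_{e(x)}$ for every $x$.

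Define the family of smooth strict subsolutions
\[
X_0 := \bigl\{w\in C^\infty(\mathbb{T}^d;\mathbb{R}^d\times\mathbb{S}_0^{d\times d}) : w \text{ is a subsolution with some smooth } q \text{ and } w(x)\in\mathcal{U}_{e(x)}\ \forall x\bigr\},
\]
which contains $w_0$. The inclusion $\mathcal{U}_{e(x)}\subset K_{e(x)}^{\mathrm{co}}$ forces $|v|^2\leq e$, so $X_0$ is uniformly $L^\infty$-bounded; on this bounded set the weak-$*$ $L^\infty$ topology is metrized by some metric $\rho$, and $X:=\overline{X_0}^{\rho}$ becomes a complete metric space. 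The functional $J(w):=\int_{\mathbb{T}^d}(e-|v|^2)\,dx\geq 0$ is upper semicontinuous on $X$ (because $\|v\|_{L^2}^2$ is weakly lower semicontinuous), so each set $\{J<1/k\}$ is open in $X$. The Baire-category heart of the argument is to prove that each such set is also dense: given $w\in X$ with $J(w)>0$, I would partition $\mathbb{T}^d$ into fine cubes, apply Property (P) in each (using $\mathrm{dist}(w,K_{e(x)})\leq \Psi(e(x)-|v(x)|^2)$ to control the local gain), and sum the resulting compactly supported high-frequency perturbations to find $w'\in X_0$ arbitrarily $\rho$-close to $w$ with $J(w')\leq J(w)-\eta$ for some $\eta=\eta(J(w))>0$; iterating drives $J$ below $1/k$. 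By Baire, $\mathcal{C}:=\bigcap_k\{J<1/k\}=\{w\in X: J(w)=0\}$ is a dense $G_\delta$, and Lemma~\ref{lm2.1} identifies every $w\in\mathcal{C}$ with a weak solution of \eqref{eq1} satisfying $|v|^2=e$ a.e.

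Finally, the upgrade to $H^{-1}$-closeness and infinite multiplicity is purely functional-analytic: by Rellich--Kondrachov, the embedding $L^2(\mathbb{T}^d)\hookrightarrow H^{-1}(\mathbb{T}^d)$ is compact, so on the $L^\infty$-bounded set $X$ weak-$*$ $L^\infty$ convergence upgrades to strong $H^{-1}$ convergence; in particular $B:=\{w=(v,U)\in X : \|v-v_0\|_{H^{-1}}<\sigma\}$ is open in $X$ and contains $w_0$. Residuality of $\mathcal{C}$ then makes $\mathcal{C}\cap B$ residual in the Polish subspace $B$, and since Property (P) produces perturbations at arbitrarily small $\rho$-scale (so $B$ has no isolated points), $\mathcal{C}\cap B$ is in fact uncountable, yielding infinitely many weak solutions with $|v|^2=e$ and $\|v-v_0\|_{H^{-1}}\leq\sigma$. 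The main obstacle I anticipate is the quantitative patching in the density step: Property (P) provides only a single-cube increment, and one must verify that summing it over a fine cube covering of $\mathbb{T}^d$ produces a \emph{uniform} gain $\eta$ depending only on $J(w)$, which is the classical quantitative lemma of convex integration and requires careful bookkeeping of the moduli $\Phi$ and $\Psi$ introduced in Section~2.
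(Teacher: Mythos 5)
Your proposal follows the same Baire-category convex-integration scheme as the paper and is essentially correct, but it takes a slightly different route through the functional-analytic step. Where the paper introduces the Baire-1 map $I(w)=\int_{\mathbb{T}^d}|w|^2\,dx$, observes that its continuity points form a residual set in $X$, and then argues (via Property~(P)) that every continuity point lies in $K_{e(x)}$ a.e., you instead work with the energy deficit $J(w)=\int(e-|v|^2)\,dx$, show $\{J<1/k\}$ is open by upper semicontinuity, and aim to prove density by a stepwise reduction of $J$ using the perturbation lemma. These are two standard, interchangeable packagings of the De Lellis--Sz\'{e}kelyhidi residuality argument, and your variant has the advantage of displaying the exhaustion more explicitly. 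You also make explicit two points the paper leaves implicit: the compact embedding $L^2(\mathbb{T}^d)\hookrightarrow H^{-1}(\mathbb{T}^d)$ to upgrade weak $L^2$ (or weak-$\ast$ $L^\infty$) closeness to $H^{-1}$ closeness, and the argument that the residual set is uncountable.

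There is one genuine gap in your density step that you should close. Property~(P) as stated gives a lower bound on $\int_Q|w|^2\,dx=\int_Q(|v|^2+|\mathbf{U}|^2)\,dx$, but your functional $J$ only detects the velocity component: since $\int_Q v\,dx=0$, the decrement is exactly $J(\bar w)-J(\bar w+w)=\int_Q|v|^2\,dx$. A priori the mass guaranteed by Property~(P) could sit entirely in the $\mathbf{U}$-component, giving no decrease of $J$. To repair this, note that the perturbation constructed in Lemma~\ref{lm3.3}/\ref{lm3.4} is a localized plane wave concentrated in a single direction $\bar{w}=(\bar v,\bar{\mathbf{U}})\in\Lambda$ chosen via Lemma~\ref{lm4.1} so that $|\bar v|\geq C(r-|v|^2)$, and from the explicit form $w\approx\bar{w}\,h_0(\lambda\xi\cdot x)\phi(x)$ one has $\int_Q|v|^2\,dx\approx|\bar v|^2\int_Q|h_0\phi|^2\,dx$, i.e.\ the $v$-component of the gain is comparable to the total gain, with a constant depending only on $r$ (indeed $|\bar{\mathbf{U}}|\leq 2\sqrt{r}\,|\bar v|$). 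With this strengthened form of Property~(P) in hand, your stepwise argument goes through; without it, the inequality $J(w')\leq J(w)-\eta$ is not justified by the Property as literally stated. (The paper's choice of $I(w)=\int|w|^2$ sidesteps this issue entirely, which is why it is the traditional formulation.)
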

\begin{proof}
\textbf{Step 1: Functional setup.} Let $e=e(x)$ be a positive smooth function. Define
\begin{align*}
	X_0=\{w\in C^{\infty}(\mathbb{T}^{d};\mathbb{R}^{d}\times \mathbb{S}_0^{d\times d}):w\ \text{is a subsolution s.t. }w(x)\in\mathcal{U}_{e(x)}\text{ for all }x\in\mathbb{R}^{d}\}.
\end{align*}
\quad We claim that $X_0$ is bounded in $L^2$. In fact, let $\bar{e}=\underset{x\in\mathbb{R}^{d}}{\max}\ e(x)$. Notes that, for the set $K_{\bar{e}}^{\text{co}}=\{(v,\mathbf{U})\in\mathbb{R}^{d}\times\mathbb{S}_0^{d\times d}:v\otimes v-\mathbf{U}\leq\dfrac{e}{d}\mathbf{I}_d\}$, if $w(x)=(v(x),\mathbf{U}(x))\in\mathcal{U}_{e(x)}\subset K_{\bar{e}}^{\text{co}}$, by taking the trace, we have
\begin{align*}
	\left|v\right|^2\leq \bar{e},
\end{align*}
Therefore, $\|v\|_{L^{\infty}}^2\leq \bar{e}$. Furthermore, since $\mathbf{U}$ is symmetric and trace-free, the operator matrix norm can be estimated as
\begin{align*}
	\left|\mathbf{U}\right|_{\infty}\leq(d-1)\left|\lambda_{\min}(\mathbf{U})\right|,
\end{align*}
where $\lambda_{\min}(\mathbf{U})$ denote the smallest eigenvalue of $\mathbf{U}$. In turn, for any $\xi\in\mathbb{R}^d\backslash\{0\}$, we have
\begin{align*}
	-\xi^{T}\mathbf{U}\xi=\xi^{T}(v\otimes v-\mathbf{U})\xi-\left|v\xi\right|^2\leq\dfrac{\bar{e}}{d}.
\end{align*}
Therefore, $\|\mathbf{U}\|_{L^{\infty}}\leq \bar{e}$. 

\quad Take the divergence on both side of the equation \eqref{eq5}, one has
\begin{align*}
    \text{div }\text{div }\mathbf{U}+\Delta q=\text{div }\mathbf{B}v.
\end{align*}
Since $w=(v,\mathbf{U})\in C^\infty(\mathbb{T}^d)$, one can decompose the term $q$ as $q=q_1+q_2$ where
\begin{align*}
    \Delta q_1=\text{div }\mathbf{B}v\quad\text{and}\quad\Delta q_2=-\text{div }\text{div }\mathbf{U}.
\end{align*}
Note that, $v,\mathbf{U}\in C^\infty(\mathbb{T}^d)$. By standard elliptic estimate, $\|q\|_{L^2}\leq C\bar{e}$. Hence, $X_0$ is bounded in $L^{2}$. We define $X$ be the closure of $X_0$ in the weak $L^2$ topology, which is metrizable by boundedness.

\textbf{Step 2: $X$ contains smooth stationary flows.}
Let $v_0$ be a smooth solution of \eqref{eq1} with pressure $p_0$, and let $e=e(x)$ be a smooth function such that $e(x)>\left|v_0(x)\right|^2$ for all $x\in\mathbb{T}^{d}$. Let
\begin{align*}
	\mathbf{U}_0=v_0\otimes v_0-\dfrac{\left|v_0\right|^2}{d}\mathbf{I}_d,\ q_0=p_0+\dfrac{\left|v_0\right|^2}{d}.
\end{align*}
By definition, $w_0=(v_0,\mathbf{U}_0)$ is a subsolution and it satisfies
\begin{align*}
	w_0(x)\in K_{\left|v_0\right|^2(x)}\ \text{for all }x\in\mathbb{T}^{d}.
\end{align*}
By assumption \eqref{eq8}, $K_{\left|v_0(x)\right|^2}\subset\mathcal{U}_{e(x)}$ since $e(x)>\left|v_0(x)\right|^2$. Hence, $w_0(x)\in\mathcal{U}_{e(x)}$ for all $x\in\mathbb{T}^{d}$. Therefore,
\begin{align*}
    w_0\in X_0.
\end{align*}
\textbf{Step 3: Continuity points of $I(w):=\int\left|w\right|^2dx$.} The mapping $I(w):=\int\left|w\right|^2dx$ for $w\in X$ is a Baire-1 map in $X$. In fact, consider $I_{\varepsilon}(w)=\int\left|\rho_{\varepsilon}\ast w\right|^2dx$, $I_{\varepsilon}(w)$ are continuous functions and $I_{\varepsilon}(w)\rightarrow I(w)$ as $\varepsilon\rightarrow 0$ for all $w\in L^2$. Hence its continuity points form a residual set in $X$. On the other hand, Property (P) with an covering and rescaling argument leads to the following statements: there exists a strictly increasing continuous function $\bar{\Phi}:[0,+\infty)\rightarrow [0,+\infty)$ with $\bar{\Phi}(0)=0$ such that for any $w\in X_0$, there exists a sequence $w_k\in X_0$ such that
\begin{itemize}
	\item {$w_k\rightharpoonup w$ weakly in $L^2(\mathbb{T}^d)$;}
	\item {$\int_{\mathbb{T}^n}\left|w_k-w\right|^2dx\geq\bar{\Phi}(\int_{\mathbb{T}^n}\text{dist}(w(x),K_{e(x)}))$.}
\end{itemize}
\quad Consequently, using a diagonal argument and the metrizability of $X$, continuity points of the map $w\rightarrow \int\left|w\right|^2dx$ in $X$ are subsolutions $w$ such that $w(x)\in K_{e(x)}$ for almost every $x\in\mathbb{T}^n$. Indeed, assume that $I$ is continuous at $w\in X$ where $\int\text{dist}(w(x),K_{e})dx>\varepsilon$. Let $w_j\in X_0$ satisfy $w_j\rightharpoonup w$ with $\int\text{dist}(w_j(x),K_{e})dx>\varepsilon$. Let $\tilde{w}_j\in X_0$ such that $\tilde{w}_j-w_j\rightharpoonup 0$ (as $j\rightarrow \infty$) and $\int\left|\tilde w_j(x)-w_j(x)\right|^2dx>c(\varepsilon)$. Then, it contradicts with $\tilde{w}_j\rightarrow w$ (as $j\rightarrow \infty$) in $L^2$.

\quad Since a residual set in $X$ is dense, there exists a sequence $w_k=(v_k,\mathbf{U}_k)\in X$ with $w_k(x)\in K_{e(x)}$ almost everywhere such that $w_k\rightharpoonup w_0$. In particular, this means that $v_k$ is a weak stationary solution of \eqref{eq1}.

\end{proof}

\begin{proposition}\label{pp6.2}
    Let $d\geq 2$, let $\Omega$ be a bounded open subset on $\mathbb{R}^d$. Assume $e(x)\in C_c(\Omega;\overline{\mathbb{R}_{+}})$ is a non-trivial smooth function, then there exist infinitely many compact supported weak solutions $v\in L^{\infty}(\mathbb{R}^n)$ to \eqref{eq1} such that $\left|v(x)\right|^2=e(x)$ for a.e. $x\in \Omega$.
\end{proposition}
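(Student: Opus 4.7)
The plan is to run the Baire category scheme of Proposition \ref{pp6.1} in a functional setting adapted to the compact-support requirement. Let $\overline{e} := \max_\Omega e > 0$ and define
\[
X_0 := \bigl\{w \in C_c^\infty(\Omega;\mathbb{R}^d \times \mathbb{S}_0^{d\times d}) : w \text{ is a stationary subsolution, } \text{supp}(w) \subset \text{supp}(e), \text{ and } w(x) \in \mathcal{U}_{e(x)} \text{ whenever } e(x) > 0\bigr\}.
\]
Since $0 \in \mathcal{U}_r$ for every $r > 0$, the trivial subsolution $w \equiv 0$ lies in $X_0$, so $X_0$ is nonempty. The pointwise inclusion $w(x) \in \mathcal{U}_{e(x)} \subset K_{\overline{e}}^{\text{co}}$ gives an $L^\infty$ bound on $X_0$, and the support constraint confines all elements to a fixed compact subset of $\Omega$. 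Take $X$ to be the closure of $X_0$ in the weak-$L^2$ topology; by the $L^\infty$ bound it is compact and metrizable.

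Following Step 3 of Proposition \ref{pp6.1}, the functional $I(w) := \int |w|^2\, dx$ is Baire-1 on $X$ (approximated by the continuous mollifications $I_\varepsilon$), so its continuity points form a dense $G_\delta$ subset of $X$. The key analytic input is the quantitative consequence of Property (P): after the usual covering and rescaling argument, for every $w \in X_0$ there is a sequence $w_k \in X_0$ with $w_k \rightharpoonup w$ weakly in $L^2$ and
\[
\int_\Omega |w_k - w|^2\, dx \geq \bar{\Phi}\!\left(\int_\Omega \text{dist}(w(x), K_{e(x)})\, dx\right)
\]
for some strictly increasing continuous $\bar{\Phi}$ with $\bar{\Phi}(0)=0$. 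By density this quantitative gap extends to all of $X$, and hence any continuity point $w$ of $I$ must satisfy $w(x) \in K_{e(x)}$ for a.e. $x$. Lemma \ref{lm2.1} then identifies the velocity component $v$ as a weak solution of \eqref{eq1} with $|v|^2 = e$ a.e. and $\text{supp}(v) \subset \overline{\text{supp}(e)} \subset \Omega$.

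For infinitely many distinct solutions, applying Property (P) to the zero subsolution already produces a nontrivial element of $X_0$, and iterating yields pairwise weakly-separated elements, so $X$ is a perfect set and the residual set of continuity points is uncountable. Since continuity points of $I$ which agree as $L^2$ functions must coincide in $X$, distinct continuity points give distinct weak solutions.

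The main technical obstacle is the degeneracy of the relaxation set on $\{e = 0\}$: $\mathcal{U}_r$ collapses to $\{0\}$ as $r \to 0^+$, so the admissible perturbation amplitude in Property (P) must vanish as one approaches $\partial\,\text{supp}(e)$. The decay is encoded in $\bar{\Phi}$ through the dependence on $r - |\bar{v}|^2$, but one has to check that the localized plane waves of Lemma \ref{lm3.3}, supported in small cubes $Q_{2^{-k}}(x^{(j,k)})$, can always be placed strictly inside $\{e > 0\}$ so that each $w_k$ retains compact support in $\Omega$. This is handled by a standard exhaustion of $\{e > 0\}$ by compact sets together with a cube size chosen smaller than the distance to $\partial\,\text{supp}(e)$, so that the iterative construction never creates support that escapes $\Omega$.
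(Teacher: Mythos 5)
Your proposal follows the paper's own proof essentially step for step: define $X_0$ as compactly supported smooth subsolutions with pointwise inclusion in the relaxed set, take the weak-$L^2$ closure $X$, observe $w\equiv 0 \in X_0$, run the Baire-1 argument on $I(w)=\int|w|^2\,dx$, and use Property (P) with a covering/rescaling to force continuity points of $I$ to land in $K_{e(x)}$ a.e. Your extra remark about placing the building-block cubes strictly inside $\{e>0\}$ is a useful clarification the paper leaves implicit, and your formulation of the support condition (supp$(w)\subset\,$supp$(e)$ with the inclusion $w(x)\in\mathcal U_{e(x)}$ only where $e>0$) is slightly cleaner than the paper's ``$w(x)\in\mathcal U_{e(x)}$ for all $x\in\Omega$,'' which is undefined at $e(x)=0$.

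One gloss is worth flagging: you assert ``$0\in\mathcal U_r$ for every $r>0$'' as if immediate. For $d\geq 3$ it is, since $\mathcal U_r=\mathring K_r^{\mathrm{co}}$ and $0$ lies in the interior. For $d=2$, however, the state $(z,\zeta)=(0,0)$ has $c=0$ and is therefore \emph{excluded} from the generating set $\mathcal V_r$ in \eqref{eq11}; one must check that $0$ is nonetheless produced by the lamination-convex-hull iteration \eqref{eq12} (e.g.\ as the midpoint of a $\Lambda$-segment joining two points of $\mathcal V_r$ with opposite $c$). The paper explicitly flags and defers this check; your proof should too, rather than stating membership as automatic.
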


\begin{proof}
\textbf{Step 1: Functional setup.} Let $e=e(x)$ be a smooth function satisfying $\text{supp }e(x)\in \Omega$. Let $X_0$ be the set of elements $(v,\mathbf{U})\in C^{\infty}(\mathbb{R}^n)$ such that
\begin{itemize}
    \item {$\text{supp }(v,\mathbf{U})\subset \Omega$;}
    \item {$(v,\mathbf{U})$ is a subsolution to \eqref{eq1};}
    \item {$(v,\mathbf{U})(x)\in \mathcal{U}_{e(x)}$ for all $x\in \Omega$.}
\end{itemize}

\quad We claim that $X_0$ is bounded in $L^2$. In fact, since $e(x)\in C_c^\infty(\Omega)$, let $\bar{e}=\underset{x\in\mathbb{R}^{d}}{\max}e(x)$. Note that, if $w(x)=(v(x),\mathbf{U}(x))\in \mathcal{U}_{e(x)}\subset K_{\bar{e}}^{\text{co}}$, by taking the trace, we have
\begin{align*}
	\left|v\right|^2\leq \bar{e}.
\end{align*}
Therefore, $\|v\|_{L^{\infty}}\leq \sqrt{\bar{e}}$. Furthermore, since $\mathbf{U}$ is symmetric and trace-free, the operator matrix norm can be estimated as
\begin{align*}
	\left|\mathbf{U}\right|_{\infty}\leq(d-1)\left|\lambda_{\min}(\mathbf{U})\right|.
\end{align*}
In turn, for any $\xi\in\mathbb{R}^d\backslash\{0\}$, we have
\begin{align*}
	-\xi^{\text{T}}\mathbf{U}\xi=\xi^{\text{T}}(v\otimes v-\mathbf{U})\xi-\left|v\xi\right|^2\leq\dfrac{\bar{e}}{d}.
\end{align*}
Therefore, $\|\mathbf{U}\|_{L^{\infty}}\leq \bar{e}$. 

\quad Take the divergence on both side of the equation \eqref{eq5}, one has
\begin{align*}
    \text{div }\text{div }\mathbf{U}+\Delta q=\text{div }\mathbf{B}v.
\end{align*}
Since $w=(v,\mathbf{U})\in C^\infty(\Omega)$, one can decompose the term $q$ as $q=q_1+q_2$ where
\begin{align*}
    \Delta q_1=\text{div }\mathbf{B}v\quad\text{and}\quad\Delta q_2=-\text{div }\text{div }\mathbf{U}.
\end{align*}

Note that, both $q_1$ and $q_2$ are compact supported since $e\in C_c(\Omega)$ and $\text{supp}(v,\mathbf{U})\subset\Omega$. By standard elliptic estimate, $\|q\|_{L^2}\leq C\bar{e}$. Hence, $X_0$ is bounded in $L^2$. We define $X$ to be the closure of $X_0$ in the weak $L^2$ topology, which is metrizable by boundedness.

\textbf{Step 2: $X$ contains smooth stationary flows.}
By definition, $w_0=(0,\mathbf{0})$ is a subsolution, and it suffices to show that $w_0\in X_0$. By construction of the relaxation set, for $d\geq 3$, $\mathcal{U}_{e(x)}=\mathring{K}_{e(x)}^{\text{co}}$, and by definition, $w_0\in \mathring{K}_{e(x)}^{\text{co}}$. For $d=2$, though in \eqref{eq11}, the case $c=0$ is excluded in the definition of $\mathcal{V}_r$, it can be showed that under the induction \eqref{eq12}, $w_0\in \mathcal{U}_{e(x)}$ still holds.

\textbf{Step 3: Continuity points of $I(w):=\int\left|w\right|^2dx$.} The mapping $I(w):=\int\left|w\right|^2dx$ for $w\in X$ is a Baire-1 map in $X$. In fact, consider $I_{\varepsilon}(w)=\int\left|\rho_{\varepsilon}\ast w\right|^2dx$, $I_{\varepsilon}(w)$ are continuous functions and $I_{\varepsilon}(w)\rightarrow I(w)$ as $\varepsilon\rightarrow 0$ for all $w\in L^2$. Hence its continuity points form a residual set in $X$. On the other hand, property (P) with an covering and rescaling argument leads to the following statements: there exists a continuous strictly increasing function $\bar{\Phi}:[0,+\infty)\rightarrow [0,+\infty)$ with $\bar{\Phi}(0)=0$ such that for any $w\in X_0$, there exists a sequence $w_k\in X_0$ such that
\begin{itemize}
	\item {$w_k\rightharpoonup w$ weakly in $L^2(\mathbb{R}^d)$;}
	\item {$\int_{\mathbb{R}^d}\left|w_k-w\right|^2dx\geq\bar{\Phi}(\int_{\mathbb{R}^d}\text{dist}(w(x),K_{e(x)}))$.}
\end{itemize}
\quad Consequently, using a diagonal argument and the metrizability of $X$, continuity points of the map $w\rightarrow \int\left|w\right|^2dx$ in $X$ are subsolutions $w$ such that $w(x)\in K_{e(x)}$ for almost every $x\in\mathbb{R}^d$. Indeed, assume that $I$ continuous at $w\in X$ where $\int\text{dist}(w(x),K_{e})dx>\varepsilon$. Let $w_j\in X_0$ satisfying $w_j\rightharpoonup w$ with $\int\text{dist}(w_j(x),K_{e})dx>\varepsilon$. Let $\tilde{w}_j\in X_0$ such that $\tilde{w}_j-w_j\rightharpoonup 0$ (as $j\rightarrow \infty$) and $\int\left|\tilde w_j(x)-w_j(x)\right|^2dx>c(\varepsilon)$. Then, it contradicts with $\tilde{w}_j\rightarrow w$ (as $j\rightarrow \infty$) in $L^2$.

\quad Since a residual set in $X$ is dense, there exists a sequence $w_k=(v_k,\mathbf{U}_k)\in X$ with $w_k(x)\in K_{e(x)}$ almost everywhere such that $w_k\rightharpoonup w_0$. In particular, this means that $v_k$ is a weak stationary solution of \eqref{eq1}.
\end{proof}

Combining Propositions \ref{pp6.1} and \ref{pp6.2}, we complete the proof of the Theorem \ref{tm1.1}.

\textbf{Acknowledgements}

The author is grateful to his advisor Chunjing Xie for illuminating instruction and for several detailed comments which have shaped this paper into present form.
\bibliographystyle{abbrv}
\bibliography{references}

\end{document}